      \theoremstyle{plain}
\DeclareMathOperator*{\argmin}{arg\,min}
\def\reals{{\mathcal R}}
\newcommand{\D}{{\mathcal{D}}}
\newcommand{\K}{\mathcal{K}}
\newcommand{\R}{\mathcal{R}}
\newcommand{\ignore}[1]{}
\def\reals{{\mathbb R}}
\def\bold0{\mathbf{0}}
\newcommand\E{\mbox{\bf E}}
\def\bx{\mathbf{x}}
\def\by{\mathbf{y}}
\newcommand{\B}{\mathbb{B}}
\newcommand{\eps}{\varepsilon}
\newcommand{\kl}[1]{\textbf{\textcolor{cyan}{KL: #1}}}
\newcommand{\iu}[1]{\textbf{\textcolor{blue}{IU: #1}}}
\newtheorem{theorem}{Theorem}[section]
\newtheorem{definition}{Definition}[section]
\newtheorem{lemma}{Lemma}[section]
\newtheorem{corollary}{Corollary}[section]
\newcommand{\G}{\mathcal{G}}
\newcommand{\X}{\mathcal{X}}
\newcommand{\Y}{\mathcal{Y}}
\newcommand{\e}{\varepsilon}
\renewcommand{\E}{\mathbb E}
\renewcommand{\R}{\mathbb R}
\newcommand{\barlambda}{\bar{\lambda}}
\renewcommand{\D}{\mathcal{D}}
\newcommand{\teps}{\tilde{\eps}}
\newcommand{\beps}{\bar{\eps}}
\def\bh{\mathbf{h}}
\renewcommand{\O}{\mathcal{O}}
\renewcommand{\bx}{\bar{x}}
\newcommand{\bg}{\bar{g}}
\newcommand{\Ta}{\mathcal{T}_{\rm{Active}}}
\DeclareMathOperator*{\argmax}{arg\,max}
\renewcommand{\by}{\bar{y}}
\icmltitlerunning{Fast Projection Onto Convex Smooth Constraints}
\begin{document}

\twocolumn[
\icmltitle{Fast Projection Onto Convex Smooth Constraints
}

\icmlsetsymbol{equal}{*}

\begin{icmlauthorlist}
\icmlauthor{Ilnura Usmanova}{eth-ifa}
\icmlauthor{Maryam Kamgarpour}{ubc}
\icmlauthor{Andreas Krause}{eth-cs}
\icmlauthor{Kfir Yehuda Levy}{vit,technion}
\end{icmlauthorlist}

\icmlaffiliation{eth-ifa}{Automatic Control Laboratory, D-ITET, ETH Zürich,
Switzerland}
\icmlaffiliation{eth-cs}{Department of Computer Science, ETH Zürich, Switzerland}
\icmlaffiliation{technion}{Department of Electrical \& Computer Engineering, Technion - Israel Institute of Technology}

\icmlaffiliation{vit}{A Viterby fellow}

\icmlaffiliation{ubc}{Department of Electrical and Computer Engineering, University of British Columbia, Canada}

\icmlcorrespondingauthor{Ilnura Usmanova}{ilnurau@control.ee.ethz.ch}

\icmlkeywords{Optimization, ICML}
\vskip 0.3in
]
\printAffiliationsAndNotice{}
\begin{abstract}
\looseness -1 The Euclidean projection onto a convex set is an important problem that arises in numerous constrained optimization tasks. Unfortunately, in many cases, computing projections is computationally demanding. In this work, we focus on projection problems where the constraints are smooth and the number of constraints is significantly smaller than the 
dimension.
The runtime of existing approaches to solving such problems is either cubic in the dimension or polynomial in the inverse of the target accuracy.  
Conversely, we propose  a simple and efficient  primal-dual approach, with a runtime that scales only linearly with the dimension, and only logarithmically in the inverse of the target accuracy. 
We empirically demonstrate its performance, and compare it with standard baselines.
\end{abstract}

\section{INTRODUCTION}

Constrained optimization problems arise naturally in numerous fields such as control theory, communication, signal processing, and machine learning (ML). 
A common approach for solving constrained problems is to \emph{project onto the set of constraints} in each step of the optimization method. 
Indeed, in ML the most popular learning method is  \emph{projected} stochastic gradient descent (SGD). Moreover, projections are employed within  projected quasi-Newton \citep{schmidt2009optimizing}, and projected Newton-type methods.

The projection operation in itself requires solving a quadratic optimization problem over the original constraints.
In this work, we address the case where we have several smooth constraints, i.e., our constraint set $\K$ is
\begin{align}
\label{constraintK}
\K =\{x\in\reals^n: h_i(x) \leq 0~;\forall i\in[m]\}~,
\end{align}
where $h_i$'s are convex and smooth. We focus on the case where the dimension of the problem $n$ is \emph{high}, and the number of constraints $m$ is \emph{low}. 
This captures several important ML applications, like multiple kernel learning \citep{ye2007learning}, semi-supervised learning \citep{zhu2006graph}, triangulation in computer vision \citep{aholt2012qcqp}, applications in signal processing \citep{huang2014randomized}, solving constrained MDPs \citep{Altman99constrainedmarkov,jin2020efficiently}.

In some special cases like box constraints, $\ell_2$ or $\ell_1$ constraints, the projection problem can be solved very efficiently. Nevertheless, in general there does not exist a unified and scalable approach for projection. One generic family of approaches for solving convex constrained problems are Interior Point Methods (IPM) \citep{karmarkar1984new,nemirovski2008interior}.  
Unfortunately, in general the runtime of IPMs scales as $O\left(n^3m\log({n}/{\e})\right)$, where $\eps$ is the accuracy of the solution, so these methods  are  unsuitable for high dimensional problems.

\textbf{Our contribution.}
\looseness -1 We propose a generic and scalable approach for projecting onto a small number of  convex smooth constraints.
Our approach applies generally for any constraint set that can be described by Eq.~\eqref{constraintK}. Moreover, our approach extends beyond the projection objective 
to any strongly convex and smooth objective.  
The overall runtime of our method for  finding an approximate projection is $O(n m^{2.5}\log^2(1/\eps) + m^{3.5}\log(1/\eps))$ (see Thm.~\ref{theorem:Main} and the discussion afterwards).
Thus, the runtime of our method scales {\em linearly} with $n$, making it 
highly suitable for solving high-dimensional problems that are ubiquitous in ML. Furthermore, in contrast to the Frank-Wolfe (FW) algorithm \citep{frank1956algorithm}, our approach is generic (i.e., does not require a linear minimization oracle) and depends only {\em logarithmically} on the accuracy. 

\looseness -1 
Moreover, we extend our technique beyond the case of intersections of few smooth constraints.
In particular, we provide a conversion scheme that enables to efficiently project onto norm balls using an oracle that projects onto their dual. One can interpret this result as an algorithmic equivalence between projections onto norm ball and its dual. This holds for both smooth and non-smooth norms.

\looseness -1 On the technical side, our approach utilizes the dual formulation of the problem, and solves it  using a cutting plane method.
Our key observation is that in the special case of projections, one can efficiently compute approximate gradients and values for the dual problem, which we then use within the cutting plane method. Along the way, we prove the convergence of cutting plane methods with approximate gradient and value oracles, which may be of independent interest.

\ignore{
\newpage
\paragraph{Motivation} Projection problem is a very important problem appearing in a range of optimization algorithms such as projected Gradient Descent,  projected quasi-Newton \cite{schmidt2009optimizing}, projected Newton-type methods. It is also used to solve the feasibility problems, and in the Alternating projections algorithm [Boyd et al., 2003] for the determining if the sets intersection is empty or not.

However, there is no unified approach to solve the projection itself. The projection operation can be a nontrivial task, especially in the high dimensional case. In simple cases such as ball or box constraints, the projection problem can be solved analytically.  In more complicated cases, in which the self-concordant barrier of the set exists and known, Interior point methods (IPM) can be applied. For example, IPM is often used for the projection onto intersection of quadratic convex constraints. However, IPM applicability is limited by the relatively small number of variables, since its computational complexity is $O(n^3)$ on the dimensionality of the problem $n$. In the rest cases the choice of the approach to solve projection is dependent on the set and on the solver imagination. Any other method of constrained optimization can be used, for example Sequential Quadratic Programming (SQP), Augmented Lagrangian (AG), etc. For the high dimensional cases with polynomial constraints the first order methods like Conditional Gradient (CD) can be used.

\paragraph{Our contribution}
We propose a unified primal dual approach for the projection operation, suitable for convex sets determined by the smooth convex constraints, and in which the number of constraints is small compared to the number of variables. Moreover, our approach is applicable for the general convex smooth constrained problems with a strongly convex objective and with an access to the first and zeroth information of the objective and constraint functions. For example, a general convex Quadratically Constrained Quadratic Program (QCQP) with the strongly-convex objective is also belongs to this class of problems. 
}

\textbf{Related work.}
In the past years,  projection free first order methods have been extensively investigated.
In particular, the {\em Frank-Wolfe} (FW) algorithm \citep{frank1956algorithm} (also known as conditional gradient method)
is explored, e.g., in  \cite{jaggi2013revisiting,garber2015faster, lacoste2015global,garber2016linear,garber2016faster,lan2016conditional, allen2017linear,lan2017conditional}. 
This approach avoids projections and instead assumes that one can efficiently solve an optimization problem with  \emph{linear} objective over the constraints in each round. Unfortunately, the latter assumption holds only in special cases. In general, this linear minimization might be non-trivial and have the same complexity as the initial problem. Moreover, FW is in general unable to enjoy the  fast convergence  rates that apply to standard gradient based methods \footnote{Note that for some special cases like simplex constraints one can ensure fast rates for FW \citep{garber2013playing,lacoste2015global}. In general, FW requires $O(1/\eps)$ calls to an oracle providing the solution to linear-minimization oracle to obtain $\eps$-accurate solutions.}. In particular, FW does not achieve the linear rate  obtained by  \emph{projected gradient descent} in the case of smooth and strongly-convex problems. Moreover, FW does not enjoy the accelerated rate obtained by \emph{projected Nesterov's method} for smooth and convex problems.


Further popular approaches for solving constrained problems are the Augmented Lagrangian method and ADMM (Alternating Direction Method of Multipliers)
\citep{boyd2011distributed,he20121,goldstein2014fast,eckstein2012augmented}. Such methods work directly on the Lagrangian formulation of the problem while adding penalty terms.  
Under specific conditions, their convergence rate may be linear \citep{nishihara2015general, Giselsson2014MetricSI}. However, 
ADMM requires the ability to efficiently compute the proximal operator, which as a special case  includes the projection operator.
To project onto the intersection of convex constraint sets $\cap_{i=1}^m \mathcal K_i$, consensus ADMM can exploit projection oracles for each $\mathcal K_i$ separately. For this general case, only sublinear rate is shown 
\citep{xu17c, peters2019algorithms}. In the special case of polyhedral sets $\mathcal K_i$, it can have linear rate \citep{Hong2017OnTL}.

\citet{levy19projection} suggest a fast projection scheme that can  approximately solve a projection onto a {\em single} smooth constraint. However, their approach cannot ensure an arbitrarily small accuracy. \citet{li2020fully} extend this approach to a simple constraint like $\ell_1,\ell_{\infty}$-ball in addition to a single smooth constraint.
\citet{basu2017large} address 
 high-dimensional QCQPs (Quadratically Constrained Quadratic Programs)  via a polyhedral approximation of the feasible set obtained by sampling  low-discrepancy sequences. Nevertheless, they only show that their method converges asymptotically, and do not provide any convergence rates. There are also works focusing on fast projections on the sets with a good structure like $\ell_1,\ell_{\infty}$ balls \citep{condat2016fast, gustavo2018fast,li2020fast}.
 

Primal-dual formulation of optimization problems  is a standard tool that has been extensively explored in the literature. 
For example, \citet{arora2005fast}, \citet{Plotkin95fastapproximation} and \citet{lee2015faster} propose to apply the primal-dual approach to solving LPs and SDP. 
Nevertheless, almost all of the previous works consider problems which are either LPs or SDPs, these  are very different from the projection problem that we consider here.   
In particular:~~~~~~\\
\textbf{(i)} These works make use of the specialized structure of LP’s and SDP’s, which does not apply to our work where we consider general constraints. \citet{Plotkin95fastapproximation} consider general convex constraints, but assume the availability of an oracle that can efficiently solve LP’s over this set. This is a very strong assumption that we do not make. 

\textbf{(ii)} We devise and employ an approximate gradient oracle for our dual problem is novel way, which is done by a natural combination of Nesterov’s method in the primal together with a cutting plane method in the dual. 
Furthermore, we provide a novel  analysis for the projection problem, showing  that an approximate solution to the dual  problem can be translated to an approximate primal solution. 

Thus, the techniques and challenges in our paper are very different from the ones in the aforementioned papers. 

\paragraph{Preliminaries and Notation.} We denote the Euclidean norm by $\|\cdot\|$. For a positive integer $t$ we denote $[t] =\{1,\ldots,t\}$. 
A function $F:\reals^n\mapsto \reals$ is \emph{$\alpha$-strongly convex} if, 
$
 F(y)\geq F(x) +\nabla F(x)^\top(y-x)  +\frac{\alpha}{2}\|x-y\|^2,~\forall x,y\in\reals^n~.
$
It is well known that strong-convexity implies $\forall x\in\reals^n$,  
$
\frac{\alpha}{2}\|x-x^*\|^2 \leq F(x)- F(x^*),~\text{where}~ x^*=\argmin_{x\in\reals^n}F(x)~.
$

\section{PROBLEM FORMULATION}\label{sec2}
 The general problem of Euclidean projection is defined as a constrained optimization problem
\begin{align}\label{problem}
  &\min_{x\in \R^n} ~~~~~~~\|x_0  - x\|^2 \nonumber\\
 &\text{subject to }  h_i(x) \leq 0, ~~\forall i =  1,\ldots,m,\tag{P1}
\end{align}
where the constraints $h_i: \R^n \rightarrow \R$ are convex. 

\textbf{Goal:}
Our goal in this work is to find an $\eps$-approximate projection $\bx$, such that for any $x: h_i(x)\leq 0$ we have, 
$
\|\bx  - x_0\|^2 \leq \|x-x_0\|^2 + \eps~,\text{ and }~~
h_i(\bx) \leq \eps~, \forall i\in[m]~.
$

\textbf{Assumptions:}
Defining $\K: = \{x\in \reals^n: h_i(x) \leq 0; ~\forall i\in[m]\}$, we assume that $\K$ is compact. Furthermore, 
we assume the $h_i$'s to be $L$-smooth and $G$-Lipschitz continuous in the convex hull of $\mathcal K$ and $x_0$, i.e.,
$|h_i(x) - h_i(y)|\leq G\|x - y\|~,
~\forall i \in[m]~\forall x,y \in Conv\{\mathcal K, x_0\} \text{ and }
\|\nabla h_i(x) - \nabla h_i(y)\|\leq L\|x - y\|~,
~\forall i \in[m]~\forall x,y \in \R^n.$
We assume both $G$ and $L$ to be known.
  We denote by $H>0$ the bound $
\max_{x\in\K}|h_i(x)| \leq H,~\forall i\in[m]. 
$
We further assume that the distance between $x_0$ and $\K$ is bounded by $B$, 
$
\min_{x\in\K}\|x-x_0\| \leq B.
$ 
Our method does not require the knowledge of $B,H$. 
The Lipschitz continuity and smoothness assumptions above are standard and often hold in machine learning applications.
\paragraph{KKT conditions:}
Our final assumption is that Slater's condition holds, i.e., that there exists a point $x\in\reals^n$ such that $\forall i\in[m];~h_i(x)<0$.  Along with convexity this  immediately
implies that
the optimal solution $x^*$ to Problem~\eqref{problem} satisfies the KKT conditions, i.e., there exist $\lambda_*^{(1)},\ldots \lambda_*^{(m)}\in\reals_+$ s.t. 
$
(x^* -x_0) +\sum_{i=1}^m \lambda_*^{(i)} \nabla h_i(x^*) = 0~, 
\lambda^{(i)}_* h_i(x^*)=0,~\forall i\in[m]~,
$ and
that there exists a finite bound on $|\lambda_*^{(i)} |  ~\forall i\in[m].$
Throughout this paper, we assume the knowledge of an upper bound that we denote by $R$:
$
|\lambda_*^{(i)} | \leq R~;\quad \forall i\in[m]~.
$
In  appendix~\ref{appendix:B}, we showcase two problems where we obtain such a bound explicitly.
When such a bound is unknown in advance, one can apply a generic technique to estimating $R$ ``on the fly", by applying a standard doubling trick. This will only yield a constant factor increase in the overall runtime. We elaborate on this in appendix~\ref{appendix:B2}. 
For simplicity we assume throughout the paper that $R\geq 1$.

\section{FAST PROJECTION APPROACH}\label{sec3}
\vspace{-7pt}
\subsection{Intuition: the Case of a Single Constraint}
\label{sec:SingleConstraint}
\looseness -1 As a warm-up, consider the case of a {\em single} smooth constraint
\begin{align}\label{eq:problem_1}
  &\min_{ x\in \R^n: h(x) \leq 0} ~~~~~~~\|x_0  - x\|^2~.
\end{align}
Our fast projection method relies on the (equivalent) dual formulation of the above problem.
Let us first define the Lagrangian $\forall x\in\reals^n,\lambda\geq0$,
    $
        \mathcal L(x,\lambda): =\|x_0  - x\|^2  + \lambda h(x) ~.
    $
Note that $\mathcal L(\cdot,\cdot)$ is strongly convex in $x$ and concave in $\lambda$. Denoting the dual objective  by $d(\lambda)$, the dual problem is,
 \begin{align}\label{eq:dual}
 \max_{\lambda\geq 0}d(\lambda), ~ \text{where}~ d(\lambda) := \min_{x\in \R^n}\|x_0  - x\|^2  + \lambda h(x).
 \end{align}
We  denote an optimal dual solution by $\lambda_* \in \argmax_{\lambda\geq 0}d(\lambda)$. Our approach is to find an approximate optimal solution to the dual problem $\max_{\lambda\geq0} d(\lambda)$.
Here we show how to do so, and demonstrate how this translates to an approximate solution for the original projection problem (Eq.~\eqref{eq:problem_1}).

\looseness -1 The intuition behind our method is the following.   $\mathcal L(x,\lambda)$ is linear in $\lambda$, and $d(\lambda): = \min_{x \in \R^n}\mathcal L(x,\lambda)$, therefore   $\max_{\lambda\geq0}d(\lambda)$ is a \emph{one-dimensional} concave problem. Moreover, $d(\lambda)$ is differentiable and smooth since the primal problem is strongly convex (see Lemma \ref{lemma:GradientLemmaHighDim}). 
Thus, if we could access an \emph{exact} gradient oracle for $d(\cdot)$, we could use {\em bisection} (see Alg.~\ref{alg:1DimConc} in the appendix) in order to find an $\eps$-approximate solution to the dual problem within $O(\log(1/\eps))$ iterations \citep{JuditskyBook}. Due to strong duality, this translates to an $\eps$-approximate solution of the original problem (Eq.~\eqref{eq:problem_1}). While an exact gradient oracle for $d(\cdot)$ is unavailable, we can efficiently compute {\em approximate} gradients for $d(\cdot)$. Fixing $\lambda\geq 0$, this can be done  by (approximately) solving the following program,
\begin{align}\label{eq:Intuition}
\min_{x\in \reals^n}\mathcal L(x,\lambda):= \|x-x_0\|^2 + \lambda h(x).
\end{align}
Letting $x^*_{\lambda} = \arg\min_{x\in \reals^n} \|x-x_0\|^2 + \lambda h(x)$ one can show that $\nabla d(\lambda) = h(x^*_{\lambda}).$ 
Thus, in order to devise an approximate estimate for $\nabla d(\lambda)$, it is sufficient to solve the
 above \emph{unconstrained} program  in $x$ to within a sufficient accuracy. This can be done at a linear rate using Nesterov's Accelerated Gradient Descent (AGD) (see Alg. \ref{alg:Nes}) due to the fact that  Eq.~\eqref{eq:Intuition} is a smooth and strongly-convex problem (recall that $h(\cdot)$ is smooth). These approximate gradients can then be used instead of the exact gradients of $d(\cdot)$ to find an $\eps$-optimal solution to the dual problem within $O(\log(1/\eps))$ iterations. The formal description for the case of a single constraint can be found in Appendix \ref{appendix:A}. Next we discuss   our approach for the case with several constraints.

\paragraph{Remark} Note that using Nesterov's AGD method for the dual problem in the same way as we do for the primal problem sounds like a very natural idea. However, we cannot guarantee the strong concavity of the dual problem and hence, we cannot hope for the linear convergence rate of this approach. In contrast, the bisection algorithm can guarantee the linear convergence rate even for non-strongly concave dual problems. 

\subsection{Duality of Projections Onto Norm Balls}
\label{sec:DualProject}
As a first application, we show how the approach from Section~\ref{sec:SingleConstraint} has applications for efficient projection on norm balls.
The dual of a norm is an important notion that is often used in the analysis of algorithms. Here we show  an  algorithmic connection between the projection onto norms and onto their dual.
Concretely, we show that one can use our framework in order to obtain \emph{an efficient conversion scheme} that enables to project onto a given unit norm ball  using an oracle that enables to project onto its dual norm ball.
This applies even if the norms are non-smooth, thus extending our technique beyond constraint sets that can be expressed as an intersection of few smooth constraints.
Our approach can also be generalized to general convex sets and their dual (polar) sets.

Given a  norm $P:\reals^n\mapsto \reals$, its dual norm is defined as,
$$
P_*(x): = \max_{P(z)\leq 1} z^\top x~;~~\forall x\in\reals^n
$$
As an example, for any $p\geq 1$ the dual of the $\ell_p$-norm is the  $\ell_q$-norm with $q=p/(p-1)$. Furthermore, the dual of the spectral norm (over matrices) is the nuclear norm; finally  for a PD matrix $A\in \reals^{d\times d}$ we can define the induced norm $\|x\|_A=x^\top A x$, whose dual is $(\|x\|_A)_*:=\|x\|_{A^{-1}}:=x^\top A^{-1} x$. 

Our goal is to project onto the norm ball w.r.t. $P(\cdot)$, i.e.,
\begin{align}\label{eq:problem_NormBall}
  &\min_{ x\in \R^n:P(x) \leq 1} ~~~~~~~\|x_0  - x\|^2~.
\end{align}
Next we state our main theorem for this section,
\begin{theorem}\label{thm:DualProj}
Let $P(\cdot)$ be a norm, and assume that we have an oracle that enables to project onto its dual norm ball $P_*(\cdot)$.
Then we can find an $\eps$-approximate solution to Problem~\eqref{eq:problem_NormBall}, by using $O(\log(1/\eps))$ calls to that oracle.
\end{theorem}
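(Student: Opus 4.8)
The plan is to specialize the one-dimensional dual approach of Section~\ref{sec:SingleConstraint} to the constraint $h(x)=P(x)-1\le 0$, replacing the role played there by Nesterov's AGD with a single call to the dual-ball projection oracle. Writing the Lagrangian $\mathcal L(x,\lambda)=\|x_0-x\|^2+\lambda\,(P(x)-1)$ and the dual $d(\lambda)=\min_{x\in\reals^n}\mathcal L(x,\lambda)$, the dual problem $\max_{\lambda\ge 0}d(\lambda)$ is again a concave one-dimensional problem, and by Slater's condition strong duality holds. As in the single-constraint case, the envelope theorem gives $d'(\lambda)=P(x^*_\lambda)-1$, where $x^*_\lambda=\argmin_{x}\|x-x_0\|^2+\lambda P(x)$. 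The only obstacle relative to Section~\ref{sec:SingleConstraint} is that $P$ may be non-smooth, so the inner minimization cannot be solved by AGD as it was there; instead I would solve it \emph{exactly} using the dual projection oracle, which (because the inner objective is strongly convex in $x$) keeps $d$ differentiable with $d'(\lambda)=P(x^*_\lambda)-1$ and lets bisection be applied verbatim.

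The key step is to observe that the inner minimizer is exactly a proximal step, $x^*_\lambda=\mathrm{prox}_{(\lambda/2)P}(x_0)$, and that for a norm the convex conjugate $P^*$ is the indicator $\iota_{B_*}$ of the dual ball $B_*:=\{z:P_*(z)\le 1\}$. Moreau's decomposition (in its extended, scaled form) then collapses the prox into a single projection onto $B_*$: with $z:=\Pi_{B_*}(2x_0/\lambda)$ one has $x^*_\lambda=x_0-\tfrac{\lambda}{2}z$, so each evaluation of the inner problem costs exactly one oracle call. Moreover, I would read off the supergradient from the same output: the optimality condition $\tfrac{2}{\lambda}(x_0-x^*_\lambda)\in\partial P(x^*_\lambda)$ shows $z\in\partial P(x^*_\lambda)$, and since $z\in B_*$ this yields $P(x^*_\lambda)=z^\top x^*_\lambda$. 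Hence $d'(\lambda)=z^\top x^*_\lambda-1$ is computable from $z$ alone, without any separate oracle for evaluating $P$.

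With the per-iteration cost settled, I would run bisection on $\lambda\in[0,\lambda_{\max}]$, taking $\lambda_{\max}$ from the bound $R$ on the optimal multiplier (first checking the trivial case $P(x_0)\le 1$, where $x_0$ is already optimal). Standard guarantees for bisection on a concave one-dimensional objective \citep{JuditskyBook} produce, in $O(\log(1/\eps))$ oracle calls, a $\bar\lambda$ whose dual value is within the target accuracy of $\max_\lambda d(\lambda)$. The final step is the dual-to-primal conversion: setting $\bar x=x^*_{\bar\lambda}$, the identity $\|\bar x-x_0\|^2=d(\bar\lambda)-\bar\lambda\,(P(\bar x)-1)$ together with weak duality $d(\bar\lambda)\le\mathrm{OPT}$ and near-tightness of the constraint (a small supergradient $P(\bar x)-1$) yields both $\|\bar x-x_0\|^2\le\mathrm{OPT}+\eps$ and $P(\bar x)\le 1+\eps$. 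I expect the main difficulty to lie precisely in this conversion, i.e.\ turning a small dual suboptimality into simultaneous guarantees on the objective and on the constraint violation; the Moreau reduction itself is clean, but quantifying how closely $P(x^*_{\bar\lambda})$ approaches $1$ as $\bar\lambda\to\lambda_*$ (and controlling $\bar\lambda\le R$) is the step that needs the careful estimate.
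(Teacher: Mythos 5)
Your proposal is correct and follows essentially the same route as the paper's proof in Appendix~\ref{appendix:DualProj}: compute \emph{exact} gradients of the one-dimensional dual $d(\lambda)$ at the cost of one dual-ball projection per query via the identity $x^*_\lambda = x_0 - \tfrac{\lambda}{2}\Pi_*\!\left(\tfrac{2x_0}{\lambda}\right)$, run bisection to get a near-optimal $\barlambda$ in $O(\log(1/\eps))$ oracle calls, and convert to a primal guarantee exactly as in the proof of Theorem~\ref{theorem:Main} with $m=1$ (where, as the paper notes, Lemma~\ref{lemma:GradientLemmaHighDim} needs no smoothness of the constraint). The only cosmetic difference is that you obtain the key identity through Moreau's decomposition for the prox of a scaled norm, while the paper derives it by writing $P(x)=\max_{P_*(y)\le 1}y^\top x$, interchanging min and max, and solving the inner quadratic in closed form (justified by Lemma~\ref{lemma:MinimaxMaximinSol}) --- the same computation in different packaging.
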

The idea behind this conversion scheme between norm ball projections  is to start with the dual formulation of the problem as we describe in Eq.~\eqref{eq:dual}. Interestingly, one can show that the projection oracle onto the dual norm, enables to compute the \emph{exact} gradients of $d(\lambda)$ in this case. This in turn enables to find an approximate solution to the dual problem using only logarithmically many calls to the dual  projection oracle. Then we can show that such a solution can be translated to an approximate primal solution.
We elaborate on our approach in Appendix~\ref{appendix:DualProj}.

\vspace{-7pt}
\subsection{Projecting onto the Intersection of  Several Non-Linear Smooth Constraints.}
\label{sec:HighDimcase}
\looseness -1 In the rest of this section we will show how to extend our method from  Section~\ref{sec:SingleConstraint} to problems with several constraints.
Similarly to Section~\ref{sec:SingleConstraint}, we solve the dual objective using approximate gradients, which we obtain by running Nesterov's method over the primal variable $x$.
Differently from the one-dimensional case, the dual problem is now {\em multi-dimensional}, so we cannot use bisection. Instead, we employ {\em cutting plane methods} like center of gravity \citep{levin1965algorithm,newman1965location}, the Ellipsoid method \citep{shor1977cut,iudin1977evaluation}, and Vaidya's method \citep{vaidya1989new}.
These methods are especially attractive in our context, since their convergence rate depends only  {\em logarithmically} on the accuracy, and their runtime is {\em linear} in the dimension $n$.
Our main result, Theorem~\ref{theorem:Main}, states that we find an $\eps$-approximate solution to the projection problem \eqref{problem} within a total runtime of  $O\left( n m^{3.5} \log(m/\eps) +m^4 \log(m/\eps)\right)$ if we use the classical Ellipsoid method, and a runtime of $O\left( n m^{2.5} \log(m/\eps) +m^{3.5}\log(m/\eps) \right)$ if we use the more sophisticated  method by \citet{vaidya1989new}. 

The Lagrangian of the original problem~\eqref{problem} is defined as follows:  $\forall x\in\reals^n, \lambda^{(1)},\ldots,\lambda^{(m)}\geq0 $,
$
\mathcal L(x,\lambda) := \|x-x_0\|^2 + \lambda^\top \bh(x)~,
$
where $\lambda: = (\lambda^{(1)},\ldots, \lambda^{(m)}), \bh(x): = (h_1(x),\ldots,h_m(x))\in \reals^m$.
Defining $d(\lambda): = \min_{x\in\reals^n}\mathcal L(x,\lambda)$, the dual problem is now defined as follows,
\begin{align}\label{dual111}
&\max_{\lambda\in\reals^m, \lambda\geq  0} d(\lambda)~,
\end{align} 
where  $\lambda\geq 0$ is an  elementwise inequality.
Recall that we  assume that we are given  $R\geq0$ such that $\lambda_*\in \{\lambda: \|\lambda\|_\infty \leq R\}$, for some $ \lambda_*\in \argmax_{\lambda\geq 0}d(\lambda)$.
 Thus, our dual problem can be written as 
\begin{align}\label{dual}
&\max_{\lambda \in\D} d(\lambda) \tag{P2}~,
\end{align} 
where $\D: = \{\lambda\in\reals^m:~\forall i\in[m];~ \lambda^{(i)}
\in [  0,R]  \}$, and $\lambda^{(i)}
$ is the $i^{\rm{th}}$ component of $\lambda$.
Thus, $\D$ is an  $\ell_{\infty}$-ball of diameter $R$ centered at $[R/2,\ldots,R/2]^T$. In the rest of this section, we describe and analyze the two components of our fast projection algorithm. In Sec.~\ref{sec:CuttingPlane} we describe the first component, which is a cutting plane method that we use to solve the dual objective. In contrast to the standard cutting plane approach where {\em exact} gradient and value oracles are available, we describe and analyze a setting with {\em approximate oracles}.  Next, in Sec.~\ref{sec:ApproximateOracleHD} we show how to construct approximate gradient and 
value oracles using a fast first order method.
Finally, in Sec.~\ref{sec:HDMain} we show how to combine these components to our fast projection algorithm that approximately solves the projection problem. 

\subsection{Cutting Plane Scheme with Approximate Oracles}
\label{sec:CuttingPlane}
We first describe a general recipe for cutting plane methods, which captures the center of gravity, Ellipsoid method, and Vaidya's method amongst others. Such schemes require access to exact gradient and value oracles for the objective.
Unfortunately, in our setting we are only able to devise {\em approximate} oracles. To address this issue, we provide a generic analysis, showing that every cutting plane method converges {\em even when supplied with approximate oracles}. This result may be of independent interest, since it will enable the use of Cutting plane schemes in ML application where we often only have access to approximate oracles.

\textbf{Cutting Plane Scheme:}
\looseness -1 We seek to solve $\max_{\lambda\in\D}d(\lambda)$, where $\D$ is a compact convex set in $\reals^m$ and $d(\cdot)$ is concave. 
Now, assume that we may access an exact separation oracle $\O_s$ for $\D$, that is,  for any $\lambda_t \notin \D$, $\O_s$ outputs  $w\in\reals^m$ such that
$\D \subseteq \{\lambda \in \R^m: w^\top(\lambda-\lambda_t)\leq 0 \}$. We also assume access to 
$(\eps_g,\eps_v)$-approximate
 gradient and value oracles $\O_g:\D\mapsto \reals^m,\O_v:\D\mapsto \reals$ for $d(\cdot)$, meaning,
$
\| \nabla d(\lambda) - \O_g(\lambda)\|\leq \eps_g, ~~ 
|  d(\lambda) - \O_v(\lambda)| \leq \eps_v.
$ 
Finally, assume that we are given a point 
$\lambda_1 = [R/2,\ldots,R/2]\in \D $, and $R>0$ such that $\D\subseteq M_1:=\{\lambda\in\R^m_+: \|\lambda-\lambda_1\|_\infty \leq R/2\}$.  A cutting plane method works as demonstrated
in Alg.~\ref{alg:CuttingPlane}.
\begin{algorithm}[ht]
\caption{Cutting Plane Method with Approximate Oracles}
\label{alg:CuttingPlane}
\begin{algorithmic}
\STATE \textbf{Input}: gradient and value oracles $\O_g,\O_v$ with accuracies $(\eps_g,\eps_v)$, and exact separation oracle $\O_s$ 
\FOR{ $t\in[T]$} 
\IF{$\lambda_t\in \D$}\STATE call gradient oracle $g_t \gets \O_g( \lambda_t)$, set $w_t = -g_t$; 
\ELSE \STATE call separation oracle and set $w_t \gets \O_s( \lambda_t)$. 
\ENDIF
\STATE  Construct $M_{t+1}$ such that $\{\lambda \in M_t: w_t^\top(\lambda-\lambda_t) \leq 0\}\subseteq M_{t+1}$, and choose $\lambda_{t+1}\in M_{t+1}$.
\ENDFOR
\STATE \textbf{Output}: $\barlambda \in \argmax_{\lambda\in \{\lambda_1,\ldots,\lambda_T\}\cap\D}\O_v(\lambda).$
\end{algorithmic}
\end{algorithm}

\textbf{Remark 1:} The output of the scheme in Alg.~\ref{alg:CuttingPlane} is always non-empty since we have assumed $\lambda_1\in\D$. 

Cutting plane methods differ from each other by the construction of sets $M_t$'s and choices of query points $\lambda_t$'s. 
For such methods, the volume of $M_t$'s decreases {\em exponentially} fast with $t$, and this gives rise to linear convergence guarantees in the case of exact gradient and value oracles. 
\begin{definition}[$\theta$-rate Cutting Plane method]
\label{def:rateCP}
We say that a cutting plane method has rate $\theta>0$ if the following applies:
$
\forall t,~\text{Vol}(M_t)/\text{Vol}(M_1)\leq e^{-\theta t}~;
$
and $\text{Vol}$ is the usual $m$-dimensional volume.
\end{definition}
For example, for the {\em center of mass method} as well as {\em Vaidya's method}, we have $\theta = O(1)$, for the {\em Ellipsoid method} we have $\theta = O(1/m)$. Our next lemma  extends the convergence of cutting plane methods to the case of approximate oracles. Let us first denote $\D_\eps: = \{ \lambda\in\D: d(\lambda)\geq d(\lambda_*)-\eps\}$ the set of all $\e$-approximate solutions
, where $\lambda_*\in\argmax_{\lambda\in\D}d(\lambda)$. We need $\D_\eps$ to have nonzero volume to ensure the required accuracy after the sufficient decrease of volume of $M_t$. 
Later we 
show that in our case with Lipschitz continuous and  convex $h_1,\ldots,h_m$, then $\D_\eps$  
contains $\ell_{\infty}$-ball of radius $r(\eps) \propto \eps/m$  (Corollary~\ref{corollary:SetSolutions}).
\begin{lemma}\label{lemma:CuttingApproximate}
Let $\lambda_1\in\D, R>0$ such, $\D\subseteq \{\lambda: \|\lambda-\lambda_1\|_\infty \leq R/2\}$.
Given $\eps>0$ assume that there exists an $\ell_{\infty}$-ball of diameter $r(\eps)>0$ that is contained in $\D_\eps.$ 
Now assume that $d(\lambda)$ is concave and we use the cutting plane scheme of Alg.~\ref{alg:CuttingPlane}  with oracles that satisfy $\eps_g\leq \frac{\eps}{R\sqrt{m}}$, and $\eps_v \leq \eps$ . Then after $T = O(\frac{m}{\theta}\log(R/r(\eps)))$ rounds it outputs $\barlambda\in\D$ such that,
$
 \max_{\lambda\in\D} d(\lambda) - d(\barlambda)  \leq 4\eps,
$
where $\theta$ is the rate of the cutting plane method. 
\end{lemma}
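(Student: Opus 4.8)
The plan is to run a volume-shrinking argument against the near-optimal set $\D_\eps$, and then convert the two oracle errors into a controlled additive loss in the objective. First I would set up the volumetric comparison. Since $M_1$ is (contained in) an $\ell_\infty$-ball of diameter $R$, its volume is at most $R^m$, while the hypothesized $\ell_\infty$-ball $B\subseteq\D_\eps$ of diameter $r(\eps)$ is a cube of volume $r(\eps)^m$. By Definition~\ref{def:rateCP} we have $\text{Vol}(M_T)\leq e^{-\theta T}R^m$, so choosing $T>\frac{m}{\theta}\log(R/r(\eps))$ forces $\text{Vol}(M_T)<\text{Vol}(B)$, hence $B\not\subseteq M_T$: there is a point $z\in B$ with $z\notin M_T$.

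Next I would identify the step at which $z$ is discarded. The only structural property I use is the defining inclusion $\{\lambda\in M_t: w_t^\top(\lambda-\lambda_t)\leq 0\}\subseteq M_{t+1}$; crucially I do \emph{not} assume the $M_t$'s are nested, since that fails for the Ellipsoid method. By induction on this inclusion, if $w_s^\top(z-\lambda_s)\leq 0$ for all $s<T$ then $z\in M_T$; as $z\notin M_T$, there must exist $\tau<T$ with $w_\tau^\top(z-\lambda_\tau)>0$, i.e.\ $z$ is cut at step $\tau$. I then rule out that this is a separation cut: if $\lambda_\tau\notin\D$ then $w_\tau=\O_s(\lambda_\tau)$ satisfies $\D\subseteq\{\lambda: w_\tau^\top(\lambda-\lambda_\tau)\leq 0\}$, and since $z\in B\subseteq\D$ we would get $w_\tau^\top(z-\lambda_\tau)\leq 0$, a contradiction. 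Hence $\lambda_\tau\in\D$, the cut is a gradient cut with $w_\tau=-g_\tau$ and $g_\tau=\O_g(\lambda_\tau)$, giving $g_\tau^\top(z-\lambda_\tau)<0$.

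The heart of the argument is turning this gradient inequality into a bound on $d(\lambda_\tau)$ despite the gradient error. Writing $\nabla d(\lambda_\tau)^\top(z-\lambda_\tau)=g_\tau^\top(z-\lambda_\tau)+(\nabla d(\lambda_\tau)-g_\tau)^\top(z-\lambda_\tau)$ and using Cauchy--Schwarz together with $\|z-\lambda_\tau\|\leq\sqrt{m}\,\|z-\lambda_\tau\|_\infty\leq\sqrt{m}R$ (both points lie in $M_1$) and $\eps_g\leq\eps/(R\sqrt{m})$, the error term is at most $\eps$, so $\nabla d(\lambda_\tau)^\top(z-\lambda_\tau)\leq\eps$. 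Concavity of $d$ then gives $d(z)\leq d(\lambda_\tau)+\eps$, and since $z\in\D_\eps$ means $d(z)\geq d(\lambda_*)-\eps$, I conclude $d(\lambda_\tau)\geq d(\lambda_*)-2\eps$.

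Finally I would account for the value-oracle error in the output rule. Because $\lambda_\tau\in\D$ is one of the queried points, the output $\barlambda=\argmax_{\lambda\in\{\lambda_1,\dots,\lambda_T\}\cap\D}\O_v(\lambda)$ satisfies $\O_v(\barlambda)\geq\O_v(\lambda_\tau)$, and applying $|\O_v(\lambda)-d(\lambda)|\leq\eps_v\leq\eps$ twice yields $d(\barlambda)\geq\O_v(\barlambda)-\eps\geq\O_v(\lambda_\tau)-\eps\geq d(\lambda_\tau)-2\eps\geq d(\lambda_*)-4\eps$, which is the claim. I expect the main obstacle to be the second step: establishing that some point of the near-optimal ball must be removed by a \emph{gradient} cut and never by the separation oracle, and making this robust to the fact that cutting-plane sets such as ellipsoids are not nested, so one cannot track "the first time $B$ leaves $M_t$" and must argue purely through the defining half-space inclusion.
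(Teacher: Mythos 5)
Your proposal is correct and follows essentially the same route as the paper's proof: the paper organizes the argument as a dichotomy (either some in-domain query point fails to cut a near-optimal point, which yields the $2\eps$ bound via concavity plus the gradient-error term, or else $\D_\eps\subseteq M_t$ for all $t$ and the volume decay bounds $T$), which is exactly the contrapositive of your "pick $T$ large, so some $z$ in the near-optimal cube must be removed by a gradient cut" formulation, and the subsequent inequality chains (Cauchy--Schwarz with $\|z-\lambda_\tau\|\le\sqrt{m}R$ and the two applications of the value-oracle accuracy) are identical. Your explicit handling of non-nestedness via induction on the half-space inclusion is a slightly more careful rendering of the step the paper states as "hence $\D_\eps\subseteq M_t$ for all $t$," but it is the same idea.
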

\begin{proof}
We denote  $\Ta = \{t\in[T]: \lambda_t\in \D\}$, clearly this set is non-empty since $\lambda_1\in\D$.
Also, for any $t\in\Ta$ we denote $g_t: = \O_g(\lambda_t)$ (note that in this case $w_t =-g_t$). We divide the proof into two cases: when $\D_{\e}$ is separated by $w_t$ from all $\lambda_t\in\D$, and when not.

\textbf{Case 1:} Assume that there exists $t\in \Ta$, and $\lambda_\eps \in\D_\eps$ such that, $w_t^\top (\lambda_\eps-\lambda_t)= g_t^\top (\lambda_t-\lambda_\eps)\geq 0 $.
In this case, using the concavity of $d(\cdot)$ and definitions of $g_t, R$, we get,
$
d(\lambda_t) 
\geq d(\lambda_\eps) +\nabla d(\lambda_t)^\top(\lambda_t-\lambda_\eps) 
= 
d(\lambda_\eps) +g_t^\top(\lambda_t-\lambda_\eps) + (\nabla d(\lambda_t)-g_t)^\top(\lambda_t-\lambda_\eps) \geq
d(\lambda_*) -\eps -R\sqrt{m}(\eps/(R\sqrt{m})) = d(\lambda_*) -2\eps,$  
where we used $\|y\|_2\leq \sqrt{m} \|y\|_\infty,~\forall y\in\reals^m$.
Thus, $d(\barlambda)\geq \O_v(\barlambda)-\eps \geq \O_v(\lambda_t) -\eps\geq d(\lambda_t) - 2\eps \geq d(\lambda_*) -4\eps$. 

\textbf{Case 2:} Assume that for any $t\in \Ta$, and any $\lambda_\eps\in\D_\eps$, we have $w_t^\top (\lambda_\eps-\lambda_t)= g_t^\top (\lambda_t-\lambda_\eps)\leq 0$.
This 
implies that $\forall t\in[T], \forall \lambda_\eps \in\D_\eps$, 
$w_t^\top (\lambda_\eps-\lambda_t)\leq 0$. Hence 
$\forall t\in[T],~\D_\eps\subseteq M_t$, implying that 
\vspace{-0.cm}
 \begin{align}\label{eq:VolRelation}
 \forall t\in[T]~\text{Vol}(\D_\eps) \leq \ \text{Vol}(M_t).
\end{align}
\looseness -1 Next, we show that the above condition can hold only  if $T\leq \frac{m}{\theta}\log(R/r(\eps))$. 
Indeed, according to our assumption $\text{Vol}(\D_\eps) \geq \text{Vol}(\ell_{\infty}\text{-ball of radius} ~r(\eps) ) = r^{m}(\eps)$. 
On the other hand, we assume that $\text{Vol}(M_t)\leq e^{-\theta t} \text{Vol}(\ell_{\infty}\text{-ball of radius} ~R/2 ) = e^{-\theta t} (R/2)^m$. Combining these with Eq.~\eqref{eq:VolRelation} implies that in order to satisfy \textbf{Case 2}, we must have $T\leq \frac{m}{\theta}\log(R/2r(\eps))$. Thus, for any $T>\frac{m}{\theta}\log(R/r(\eps))$, \textbf{Case 1} must hold, which establishes the lemma.
\end{proof}
\vspace{-10pt}
\subsection{Gradient and Value Oracles for the Dual}
\label{sec:ApproximateOracleHD} 
Here we show how to efficiently devise gradient and value oracles for the dual objective.
Our scheme is described in Alg.~\ref{alg:MinimizingPrimalHighDim}.
Similarly to  the one-dimensional case, given $\lambda$ we approximately minimize $\mathcal \mathcal L(\cdot,\lambda)$, which enables us to derive approximate gradient and value oracles.
The guarantees of Alg.~\ref{alg:MinimizingPrimalHighDim} are given in Lemma~\ref{lemma:GradOracleLemmaHD}. Before we state the guarantees of Alg.~\ref{alg:MinimizingPrimalHighDim} we derive a closed form formula for $\nabla d(\lambda)$. 
Recall that,  $d(\lambda) =\min_{x\in\reals^n}  \mathcal L(x,\lambda)$, and that  $\mathcal L(x,\lambda)$ is $2$-strongly-convex in $x$.
This implies that the minimizer of $\mathcal L(\cdot,\lambda)$ is unique, and we therefore denote,
\begin{align} \label{eq:ExactGradDualHD}
x^*_\lambda: = \argmin_{x\in \reals^n} \mathcal L(x,\lambda)~.
\end{align}
The next lemma shows we can compute $\nabla d(\lambda)$ based on $x^*_\lambda$, and states the smoothness of $\nabla d(\lambda)$.

\begin{algorithm}[H]
\caption{$\O$- approximate gradient/value oracles for $d(\cdot)$ }
\label{alg:MinimizingPrimalHighDim}
\begin{algorithmic}
\STATE \textbf{Input}:  $\lambda\geq 0$,  target accuracy $\teps$
\STATE Compute $x_{\lambda}$, an $\teps$-optimal solution of
$
\min_{x\in\reals^n} \mathcal L(x,\lambda): = \|x-x_0\|^2 + \lambda^\top \bh(x)~.
$
\STATE \textbf{Method:} 
Nesterov's AGD (Alg.~\ref{alg:Nes}) with $\alpha=2, \beta = 2+\|\lambda\|_1 L$, and 
$T = O(\sqrt{\beta}\log(\beta B/\tilde\eps))$~.
\STATE \textbf{Let}:~~$v:= \|x_{\lambda}-x_0\|^2 + \lambda^\top \bh(x_{\lambda})$,
$g: = \bh(x_{\lambda})$
\STATE \textbf{Output}: $(x_{\lambda}, g,v)$
\end{algorithmic}
\end{algorithm}

\begin{algorithm}[H]
\caption{Accelerated Gradient Descent (AGD) 
\cite{nesterov1998introductory}
}
\label{alg:Nes}
\begin{algorithmic}
\STATE \textbf{Input}: 
$F:\reals^n \rightarrow \reals$,  $x_0\in\reals^n$, iterations 
$T$, strong-convexity $\alpha$, smoothness  $\beta$
\STATE {Set}: $y_0 = x_0$, $\kappa: = \beta/\alpha$  
\FOR{$t=0,\ldots, T-1$} 
\STATE 
$y_{t+1} = x_t-\frac{1}{\beta} \nabla F(x_t)~,$\\
$x_{t+1} = \left(1+\frac{\sqrt{\kappa}-1}{\sqrt{\kappa}+1} \right)y_{t+1} -  \frac{\sqrt{\kappa}-1}{\sqrt{\kappa}+1} y_t~.$
\ENDFOR
\STATE \textbf{Output}: $y_T$
\end{algorithmic}
\end{algorithm}
\begin{lemma}\label{lemma:GradientLemmaHighDim}
For any $\lambda\geq 0$ the following holds:\\ \textbf{(i)}  $\nabla d(\lambda) = \bh(x^*_\lambda)$, and $\forall \lambda_1,\lambda_2\geq0$,
$$
\|\nabla d(\lambda_1)-\nabla d(\lambda_2)\|\leq  mG^2 \|\lambda_1-\lambda_2\|~;
$$
\vspace{-0.55cm}
$$
~~\text{and},~~
\|x^*_{\lambda_1} -   x^*_{\lambda_2}\|\leq \sqrt{m}G\|\lambda_1-\lambda_2\|.
$$
Also, \textbf{(ii)} $
d(\lambda_*) - d(\lambda) \leq m^2G^2\|\lambda -\lambda_*\|_\infty^2 + mH\|\lambda-\lambda_*\|_\infty.
$
 Moreover, \textbf{(iii)}  $x^* = x^*_{\lambda_*}$, where  $\lambda_*,x^*$ are the optimal solutions to the dual and primal problems.
\end{lemma}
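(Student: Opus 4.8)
The plan is to treat the three claims with two structural facts in hand: $\mathcal L(\cdot,\lambda)$ is $2$-strongly convex in $x$ (so the minimizer $x^*_\lambda$ is unique), and the vector map $\bh$ is $\sqrt m G$-Lipschitz in the Euclidean norm, since $\|\bh(x)-\bh(y)\|^2=\sum_{i=1}^m(h_i(x)-h_i(y))^2\le mG^2\|x-y\|^2$ by $G$-Lipschitzness of each $h_i$. For claim \textbf{(i)} I would first establish the gradient formula by a standard envelope (Danskin) argument. Fixing $\lambda$ and any $\mu\ge 0$, plugging the suboptimal point $x^*_\lambda$ into the minimization defining $d(\mu)$ gives $d(\mu)\le \mathcal L(x^*_\lambda,\mu)=d(\lambda)+\bh(x^*_\lambda)^\top(\mu-\lambda)$, so $\bh(x^*_\lambda)$ is a supergradient of the concave function $d$ at $\lambda$. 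Because the minimizer is unique and (as shown next) varies continuously in $\lambda$, this supergradient is unique and continuous in $\lambda$, so $d$ is continuously differentiable with $\nabla d(\lambda)=\bh(x^*_\lambda)$.

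Next I would prove the two Lipschitz estimates in \textbf{(i)} by a two-point strong-convexity (co-monotonicity) argument. Writing $x_j=x^*_{\lambda_j}$ and using that $x_j$ minimizes the $2$-strongly convex map $\mathcal L(\cdot,\lambda_j)$, we obtain $\mathcal L(x_2,\lambda_1)\ge \mathcal L(x_1,\lambda_1)+\|x_1-x_2\|^2$ and $\mathcal L(x_1,\lambda_2)\ge \mathcal L(x_2,\lambda_2)+\|x_1-x_2\|^2$. Adding these and cancelling the $\|\cdot-x_0\|^2$ objective terms, which do not depend on $\lambda$, leaves $(\lambda_1-\lambda_2)^\top(\bh(x_2)-\bh(x_1))\ge 2\|x_1-x_2\|^2$. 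Applying Cauchy--Schwarz together with the $\sqrt m G$-Lipschitzness of $\bh$ gives $\|x^*_{\lambda_1}-x^*_{\lambda_2}\|\le \sqrt m G\|\lambda_1-\lambda_2\|$ (in fact with an extra factor $1/2$). Composing this bound with $\nabla d(\lambda)=\bh(x^*_\lambda)$ and again the Lipschitzness of $\bh$ yields the claimed $mG^2$-Lipschitz bound on $\nabla d$.

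For \textbf{(iii)} I would invoke Slater's condition and strong duality, which make $(x^*,\lambda_*)$ a saddle point of $\mathcal L$; the inequality $\mathcal L(x^*,\lambda_*)\le \mathcal L(x,\lambda_*)$ for all $x$ says exactly that $x^*$ minimizes $\mathcal L(\cdot,\lambda_*)$, and uniqueness of that minimizer forces $x^*=x^*_{\lambda_*}$ (equivalently, the KKT stationarity condition makes $x^*$ the unique stationary point of the strongly convex $\mathcal L(\cdot,\lambda_*)$). Finally, for \textbf{(ii)} I would combine the $mG^2$-smoothness of $d$ from \textbf{(i)} with optimality at $\lambda_*$: concavity plus smoothness give $d(\lambda_*)-d(\lambda)\le -\nabla d(\lambda_*)^\top(\lambda-\lambda_*)+\tfrac{mG^2}{2}\|\lambda-\lambda_*\|^2$. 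Using \textbf{(iii)} to write $\nabla d(\lambda_*)=\bh(x^*)$ and complementary slackness $\lambda_*^{(i)}h_i(x^*)=0$, the linear term collapses to $-\sum_i h_i(x^*)\lambda^{(i)}=\sum_{i:\lambda_*^{(i)}=0}(-h_i(x^*))\lambda^{(i)}$, which is at most $mH\|\lambda-\lambda_*\|_\infty$ using $|h_i(x^*)|\le H$ and $\lambda^{(i)}=\lambda^{(i)}-\lambda_*^{(i)}$ on that index set; the quadratic term is bounded by $m^2G^2\|\lambda-\lambda_*\|_\infty^2$ via $\|\cdot\|\le\sqrt m\|\cdot\|_\infty$. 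This gives claim \textbf{(ii)}.

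The main obstacle is less any single inequality than putting the differentiability in \textbf{(i)} on rigorous footing and being careful in \textbf{(ii)} about the boundary structure of the dual feasible set: the reduction of the linear term relies on complementary slackness to kill the indices with $\lambda_*^{(i)}>0$ and on primal feasibility ($h_i(x^*)\le 0$) to control the sign and magnitude on the remaining indices. Thus claim \textbf{(ii)} genuinely depends on \textbf{(iii)} to identify $\nabla d(\lambda_*)$ with $\bh(x^*)$ and on the primal KKT conditions guaranteed by Slater, so I would prove the claims in the order \textbf{(i)}, \textbf{(iii)}, \textbf{(ii)}.
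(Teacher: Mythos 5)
Your proposal is correct and follows essentially the same route as the paper's proof: a Danskin/envelope argument for $\nabla d(\lambda)=\bh(x^*_\lambda)$, a two-point strong-convexity argument combined with the $\sqrt{m}G$-Lipschitzness of $\bh$ for the stability of $x^*_\lambda$ and the smoothness of $d$, the smoothness inequality at $\lambda_*$ for part \textbf{(ii)}, and strong duality plus strong convexity of $\mathcal L(\cdot,\lambda_*)$ for part \textbf{(iii)}. The only differences are cosmetic: you symmetrize the strong-convexity argument (gaining a harmless factor $1/2$) and bound the linear term in \textbf{(ii)} via complementary slackness where the paper simply uses Cauchy--Schwarz with $\|\bh(x^*)\|\le\sqrt{m}H$.
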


The proof is quite technical and can be found in Appendix \ref{Proof_lemma:GradientLemmaHighDim}. 
From the above lemma we can show that for any $\eps$ there exists an $\ell_{\infty}$-ball of a sufficiently large radius $r(\eps)$ contained in the set of $\eps$-optimal solutions to the dual problem in $\D$. 
\begin{corollary} \label{corollary:SetSolutions}
Let $\eps\in [0, 1]$. Then there exists an $\ell_{\infty}$-ball of radius $r(\eps): = (2m)^{-1}\min\{\eps/H,\sqrt{\eps}/G\}$ that is contained in the set of $\eps$-optimal solutions within $\D$.
\end{corollary}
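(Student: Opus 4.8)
The plan is to read off the claim from part (ii) of Lemma~\ref{lemma:GradientLemmaHighDim}, which controls the dual suboptimality $d(\lambda_*)-d(\lambda)$ purely through the distance $\|\lambda-\lambda_*\|_\infty$, and then to exhibit an explicit $\ell_\infty$-ball sitting near $\lambda_*$ on which that bound stays below $\eps$. First I would turn Lemma~(ii) into a distance-to-membership criterion: since its right-hand side $m^2G^2\|\lambda-\lambda_*\|_\infty^2 + mH\|\lambda-\lambda_*\|_\infty$ is increasing in the distance, any $\lambda\in\D$ with $\|\lambda-\lambda_*\|_\infty\le\rho$ satisfies $d(\lambda_*)-d(\lambda)\le m^2G^2\rho^2+mH\rho$, so it suffices to pick $\rho$ making this at most $\eps$. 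I would bound the linear and quadratic terms separately, each by a fixed fraction of $\eps$: the linear term asks for $\rho\lesssim \eps/(mH)$ and the quadratic term for $\rho\lesssim \sqrt{\eps}/(mG)$, and the choice $\rho=(2m)^{-1}\min\{\eps/H,\sqrt{\eps}/G\}$ meets both (the hypothesis $\eps\in[0,1]$ is what keeps the linear term from swamping the quadratic one). This is exactly the claimed $r(\eps)$, and it shows $\{\lambda\in\D:\|\lambda-\lambda_*\|_\infty\le r(\eps)\}\subseteq\D_\eps$.

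The main obstacle is that $\lambda_*$ need not be interior to $\D=[0,R]^m$: by complementary slackness an inactive constraint forces $\lambda_*^{(i)}=0$, so the ball $\{\lambda:\|\lambda-\lambda_*\|_\infty\le r(\eps)\}$ may stick out of $\D$, whereas $\D_\eps\subseteq\D$ by definition. To repair this I would not center the ball at $\lambda_*$ but at the coordinatewise clamp $c$ of $\lambda_*$ into the box $[r(\eps),\,R-r(\eps)]^m$. By construction the ball $\{\lambda:\|\lambda-c\|_\infty\le r(\eps)\}$ is contained in $\D$, and since clamping moves each coordinate by at most $r(\eps)$ we have $\|c-\lambda_*\|_\infty\le r(\eps)$; a triangle inequality then bounds $\|\lambda-\lambda_*\|_\infty$ for every $\lambda$ in this ball, and feeding that into the distance criterion above places the whole ball inside the approximate-solution set. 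I would also check here that the clamping box is nonempty, i.e. that $r(\eps)\le R/2$, which holds in the relevant regime since $R\ge1$ and $r(\eps)$ is small (otherwise no ball of radius $r(\eps)$ fits in $\D$ at all).

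The delicate part is purely the bookkeeping of constants created by the clamp. Centering at $\lambda_*$ itself gives points within $r(\eps)$ of $\lambda_*$ and Lemma~(ii) then yields suboptimality $\le\tfrac34\eps$ with room to spare; the clamp, however, can double the worst-case distance to $2r(\eps)$, so the same computation only certifies membership in $\D_{c\eps}$ for a small absolute constant $c$. I expect this factor to be where one must track the split in Lemma~(ii) and the precise $(2m)^{-1}$ coefficient carefully, but it is immaterial downstream: Lemma~\ref{lemma:CuttingApproximate} already absorbs constant multiples of $\eps$ (its guarantee is stated up to a factor $4\eps$), so the form $r(\eps)=(2m)^{-1}\min\{\eps/H,\sqrt{\eps}/G\}$ is exactly what is needed to drive the volume argument there, possibly after rescaling $\eps$ by a constant.
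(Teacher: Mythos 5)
Your proposal is correct and takes essentially the same route as the paper: both apply part \textbf{(ii)} of Lemma~\ref{lemma:GradientLemmaHighDim} to points with $\|\lambda-\lambda_*\|_\infty\le r(\eps)$, so that the quadratic and linear terms contribute at most $\eps/4$ and $\eps/2$ respectively, and both handle the boundary by positioning a box adjacent to $\lambda_*$ inside $\D$. The only difference is that the paper uses a one-sided box $\times_{i=1}^m S_i$ with $S_i=[\lambda_*^{(i)},\lambda_*^{(i)}+r(\eps)]$ or $[\lambda_*^{(i)}-r(\eps),\lambda_*^{(i)}]$ per coordinate, which keeps every point within $\ell_\infty$-distance $r(\eps)$ of $\lambda_*$ and thus certifies $\eps$-optimality exactly, whereas your clamped center doubles the worst-case distance to $2r(\eps)$ and, as you correctly flag, only yields a constant multiple of $\eps$ that must then be absorbed downstream.
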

The proof is in Appendix \ref{proof:corollary1}. 
Eq.~\eqref{eq:ExactGradDualHD} together with Lemma~\ref{lemma:GradientLemmaHighDim} suggest that exactly minimizing $\mathcal L(\cdot,\lambda)$ enables to obtain gradient and value oracles for $d(\cdot)$. In Alg~\ref{alg:MinimizingPrimalHighDim} we do so approximately, and the next lemma shows that this translates to approximate oracles.

\begin{lemma}\label{lemma:GradOracleLemmaHD}
Given, $\lambda\geq0$, running Alg.~\ref{alg:MinimizingPrimalHighDim} it outputs, $(x,g,v)$ such that the following applies: 
\begin{align*}
&\textbf{(i)}~~\|g-\nabla d(\lambda)\| \leq \sqrt{mG^2\teps}~; ~
 ~
\textbf{(ii)}~~\|x-x^*_\lambda\|^2\leq \teps~; ~ \\
&~~\text{ and }~
\textbf{(iii)}~~|v-d(\lambda)|\leq \teps~.
\end{align*}
 Additionally,  Alg.~\ref{alg:MinimizingPrimalHighDim} requires $T_{\rm{Internal}} = O(\sqrt{1+mR L}\log(m/\teps))$ queries for the gradient of $\bh(\cdot)$, and its  total runtime is $O(nmT_{\rm{Internal}})\approx O(nm^{3/2}\log(m/\teps))$.
\end{lemma}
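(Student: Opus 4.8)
The plan is to read off from Alg.~\ref{alg:MinimizingPrimalHighDim} that the returned triple is $x = x_\lambda$, $g = \bh(x_\lambda)$, and $v = \mathcal L(x_\lambda,\lambda) = \|x_\lambda-x_0\|^2 + \lambda^\top\bh(x_\lambda)$, where $x_\lambda$ is the point produced by AGD. The whole argument rests on two structural facts about $\mathcal L(\cdot,\lambda)$. First, since $\|x-x_0\|^2$ is $2$-strongly convex and $\lambda^\top\bh(\cdot) = \sum_i \lambda^{(i)} h_i(\cdot)$ is convex (each $h_i$ convex, $\lambda^{(i)}\ge 0$), the map $\mathcal L(\cdot,\lambda)$ is $\alpha = 2$-strongly convex. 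Second, since each $h_i$ is $L$-smooth, $\lambda^\top\bh(\cdot)$ is $\|\lambda\|_1 L$-smooth, and together with the $2$-smoothness of $\|x-x_0\|^2$ this makes $\mathcal L(\cdot,\lambda)$ be $\beta$-smooth with $\beta = 2 + \|\lambda\|_1 L$ --- exactly the constants fed to AGD.

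Given these, I would establish (iii), (ii), (i) in that order, since each reduces to the $\teps$-optimality of $x_\lambda$, i.e.\ $\mathcal L(x_\lambda,\lambda) - d(\lambda)\le \teps$. Claim (iii) is immediate: $v - d(\lambda) = \mathcal L(x_\lambda,\lambda) - \mathcal L(x^*_\lambda,\lambda)$ is the optimization gap, which is non-negative (as $x^*_\lambda$ minimizes) and at most $\teps$. Claim (ii) follows from $2$-strong convexity evaluated at the minimizer $x^*_\lambda$: $\|x_\lambda - x^*_\lambda\|^2 \le \mathcal L(x_\lambda,\lambda) - \mathcal L(x^*_\lambda,\lambda) \le \teps$. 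For (i), I would invoke Lemma~\ref{lemma:GradientLemmaHighDim}(i), which gives $\nabla d(\lambda) = \bh(x^*_\lambda)$; since $g = \bh(x_\lambda)$, the $G$-Lipschitzness of each $h_i$ yields $|h_i(x_\lambda) - h_i(x^*_\lambda)| \le G\|x_\lambda - x^*_\lambda\|$ componentwise, so $\|g - \nabla d(\lambda)\|^2 = \sum_{i} (h_i(x_\lambda) - h_i(x^*_\lambda))^2 \le mG^2\|x_\lambda - x^*_\lambda\|^2 \le mG^2\teps$, which is the stated bound after taking square roots.

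For the complexity statement, I would invoke the standard linear-convergence guarantee of AGD (Alg.~\ref{alg:Nes}) on an $\alpha$-strongly-convex, $\beta$-smooth objective: after $T = O(\sqrt{\beta/\alpha}\,\log(\Delta_0/\teps))$ iterations the function gap drops below $\teps$, where $\Delta_0 = \mathcal L(x_0,\lambda) - d(\lambda)$ is the gap at the start point $x_0$. To turn this into the claimed count I would bound $\Delta_0$: by $\beta$-smoothness and $\nabla\mathcal L(x^*_\lambda,\lambda)=0$ one has $\Delta_0 \le \frac{\beta}{2}\|x_0 - x^*_\lambda\|^2$, and from $\mathcal L(x^*_\lambda,\lambda)\le\mathcal L(x_0,\lambda)$ together with Lipschitzness I get $\|x_0 - x^*_\lambda\|^2 \le \lambda^\top(\bh(x_0)-\bh(x^*_\lambda)) \le \|\lambda\|_1 G\|x_0-x^*_\lambda\|$, hence $\|x_0-x^*_\lambda\|\le \|\lambda\|_1 G \le mRG$. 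Plugging in $\alpha=2$, $\beta \le 2+mRL$ and treating $R,G,L,B$ as constants collapses the logarithm to $O(\log(m/\teps))$ and gives $T_{\rm{Internal}} = O(\sqrt{1+mRL}\,\log(m/\teps))$. Finally, each AGD step evaluates $\nabla\mathcal L(x,\lambda) = 2(x-x_0) + \sum_i \lambda^{(i)}\nabla h_i(x)$, i.e.\ one query to $\nabla\bh$ at cost $O(nm)$, so the total runtime is $O(nm\,T_{\rm{Internal}}) = O(nm^{3/2}\log(m/\teps))$.

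The routine parts are (i)--(iii); the delicate step is the complexity bound. The main obstacle is controlling the initial suboptimality $\Delta_0$ (equivalently $\|x_0-x^*_\lambda\|$) cleanly enough that the logarithmic factor becomes $O(\log(m/\teps))$, and in particular justifying that the $G$-Lipschitz inequality may be applied at $x^*_\lambda$ and along the AGD trajectory --- this requires those points to lie in $Conv\{\K,x_0\}$, the region where Lipschitzness is assumed. I would either argue that the iterates remain in this region or appeal to the same convention already used in Lemma~\ref{lemma:GradientLemmaHighDim}, whose gradient and Lipschitz estimates I am invoking.
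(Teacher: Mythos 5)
Your proof of parts \textbf{(i)}--\textbf{(iii)} is exactly the paper's argument: the $\teps$-optimality gap gives \textbf{(iii)} directly, $2$-strong convexity of $\mathcal L(\cdot,\lambda)$ at the minimizer gives \textbf{(ii)}, and the $\sqrt{m}G$-Lipschitzness of $\bh$ together with $\nabla d(\lambda)=\bh(x^*_\lambda)$ gives \textbf{(i)}. The runtime analysis also matches in structure (AGD with $\alpha=2$, $\beta=2+\|\lambda\|_1 L$, $\|\lambda\|_1\le mR$), but you bound the initial distance $\|x_0-x^*_\lambda\|$ by a different route: the paper writes $\|x_0-x^*_\lambda\|^2\le 2(\|x_0-x^*\|^2+\|x^*_{\lambda_*}-x^*_\lambda\|^2)\le 2(B^2+m^2G^2R^2)$, invoking the bound $B$ on the distance to $\K$, the identity $x^*=x^*_{\lambda_*}$, and the $\sqrt{m}G$-Lipschitz continuity of $\lambda\mapsto x^*_\lambda$ from Lemma~\ref{lemma:GradientLemmaHighDim}; you instead use $\mathcal L(x^*_\lambda,\lambda)\le\mathcal L(x_0,\lambda)$ to get $\|x_0-x^*_\lambda\|^2\le\lambda^\top(\bh(x_0)-\bh(x^*_\lambda))\le\|\lambda\|_1 G\|x_0-x^*_\lambda\|$, hence $\|x_0-x^*_\lambda\|\le mRG$. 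Your variant is a bit more self-contained (it does not need $B$ or the Lipschitz-in-$\lambda$ property of $x^*_\lambda$), while both rest on applying the $G$-Lipschitz assumption on $\bh$ at the unconstrained minimizers $x^*_\lambda$ --- a point the paper also glosses over, and which you correctly flag; either way the quantity inside the logarithm is polynomial in $m,R,G$ (and $B$), so both yield $T_{\rm{Internal}}=O(\sqrt{1+mRL}\log(m/\teps))$ and total runtime $O(nmT_{\rm{Internal}})$.
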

The proof is in Appendix \ref{proof:lemma4.3}.  The proof of the first part is based on $2$-strong-convexity of $\mathcal L(\cdot,\lambda)$ and $G$-Lipschitz continuity of $\textbf{h}(\cdot)$. 
The second part of the above result also uses the convergence rate of Nesterov's AGD \cite{nesterov1998introductory} described in Appendix in Theorem \ref{thm:nesterov}. 
Using the notation that appears in the description of the cutting plane method (Alg.~\ref{alg:CuttingPlane}) we can think of Alg.~\ref{alg:MinimizingPrimalHighDim} as a procedure that receives $\lambda\geq 0$ and returns 
a gradient oracle $\O_g(\lambda):=g$, value oracle $\O_v(\lambda):=v$, and primal solution oracle $\O_x(\lambda):=x_{\lambda}$.

\textbf{Remark:}
Notice that scaling the constraints $\bh$ by a factor $\alpha>0$ leaves the constraints set unchanged, while scaling the smoothness $L$  by a factor of $\alpha$.
Nonetheless, this naturally also scales the bound of the Lagrange multipliers, $R$, by a factor of $1/\alpha$. 
Lemma~\ref{lemma:GradOracleLemmaHD} tells us that the runtime of our algorithm, $T_{\rm{Internal}}$, depends only on $RL$ and is therefore {\em invariant} to such scaling. 

\subsection{Fast Projection Algorithm}
\label{sec:HDMain}
Below we describe how to compose the two components presented in Sections~\ref{sec:CuttingPlane} and~\ref{sec:ApproximateOracleHD} to a complete algorithm for solving the projection problem of \eqref{problem}.
\begin{algorithm}[H]
\caption{\textbf{Fast Projection Method} }
\label{alg:Main}
\begin{algorithmic}
\STATE \textbf{Input}: Accuracy parameters $\teps>0$, $\lambda_1\in D$, number of rounds $T$
\STATE \textbf{(1)} For any $\lambda \in \R^m$  define three oracles: 
$
\O_g(\lambda):= g,~\O_v(\lambda):= v,\text{ and }~\O_x(\lambda):= x_{\lambda}
$ according to the output $(g,v,x_{\lambda})$ of  Alg.~\ref{alg:MinimizingPrimalHighDim} with the inputs $\lambda$ and $\teps$, 
\STATE \textbf{(2)} Define the separation oracle $
\O_s(\lambda)^i:= [1,\text{if } \lambda^{(i)}>R; 0,\text{if }\lambda^{(i)}\in (0,R); -1, \text{if } \lambda^{(i)}<0]$,
\STATE \textbf{(3)} Employ a cutting plane method as in  Alg.~\ref{alg:CuttingPlane} for solving the dual problem, $\max_{\lambda\in\D}d(\lambda)$,
\STATE \textbf{Output:}  $\barlambda \in \argmax_{\lambda\in \{\lambda_1,\ldots,\lambda_T\}\cap\D}\O_v(\lambda_t)$, and $\bx = \O_x(\barlambda)$.
\end{algorithmic}
\end{algorithm}
Our method in Alg.~\ref{alg:Main} employs a cutting plane scheme (Alg.~\ref{alg:CuttingPlane}), while using Alg.~\ref{alg:MinimizingPrimalHighDim} in order to devise the gradient and value oracles for $d(\cdot)$. 
Next we discuss the role of   the primal solution oracle $\O_x$, and connect it to our overall projection scheme.
Recall that the cutting plane method that we use above finds $\barlambda$, which is an approximate solution to the dual problem.
To extract a primal solution from the dual solution $\barlambda$,  it makes sense to approximately solve $\min_{x\in\reals^n}\mathcal L(x,\barlambda)$, and this is exactly what 
the oracle $\O_x$ provides (see Alg.~\ref{alg:MinimizingPrimalHighDim}). 
Next we state the guarantees of the above scheme.
\begin{theorem}
\label{theorem:Main}
Let $\eps>0$, and consider the projection problem of \eqref{problem}, and its dual formulation in Eq.~\eqref{dual111}. Then upon invoking the scheme in Alg.~\ref{alg:Main} with $\teps = \frac{\eps^4}{256(mRG)^6}$, and $T = O(\frac{m}{\theta}\log(mR/\eps))$, it outputs $\bx$ such that $\forall x\in\K: = \{x: \bh(x)\leq 0\}$, 
$$
\|\bx-x_0\|^2 \leq \|x-x_0\|^2 + 6\eps;~~\text{ and }~~
h_i(\bx) \leq \eps,~~ \forall i\in[m]~.
$$ 
Moreover, the total runtime of our method is $O\left(n m^{2.5}{\theta^{-1}} \log^2(m/\eps) + \tau_{\rm{CP}}(m)m\theta^{-1}\log(mR/\eps)\right)$, where $\theta$ is the rate of the cutting plane method (Def.~\ref{def:rateCP}), and $\tau_{\rm{CP}}(m)$ is the extra runtime required by the cutting plane method for updating the sets $M_t$ beyond calling the gradient and value oracles.
\end{theorem}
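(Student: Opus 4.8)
The plan is to run the cutting-plane scheme of Alg.~\ref{alg:CuttingPlane} on the dual \eqref{dual} to produce a near-optimal dual point $\barlambda$, and then convert the resulting \emph{dual value gap} into a bound on $\|x^*_{\barlambda}-x^*\|$ using the strong convexity of $\mathcal L(\cdot,\barlambda)$ in $x$; feasibility and near-optimality of the returned point $\bx=\O_x(\barlambda)$ then follow from Lipschitzness and the triangle inequality. First I would settle the accuracy bookkeeping. By Lemma~\ref{lemma:GradOracleLemmaHD}, running Alg.~\ref{alg:MinimizingPrimalHighDim} with target $\teps$ yields oracles with $\eps_g=\sqrt{mG^2\teps}$ and $\eps_v=\teps$. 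Introducing the intermediate accuracy $\eps':=\eps^2/(16(mRG)^2)$, the stated choice $\teps=\eps^4/(256(mRG)^6)=(\eps')^2/(mRG)^2$ makes $\eps_g\le\eps'/(R\sqrt m)$ and $\eps_v\le\eps'$, i.e.\ exactly the hypotheses of Lemma~\ref{lemma:CuttingApproximate}. Since $\D_{\eps'}$ contains an $\ell_\infty$-ball of radius $r(\eps')$ (Corollary~\ref{corollary:SetSolutions}), that lemma guarantees that after $T=O(\tfrac m\theta\log(R/r(\eps')))=O(\tfrac m\theta\log(mR/\eps))$ rounds the scheme returns $\barlambda\in\D$ with $d(\lambda_*)-d(\barlambda)\le 4\eps'$, where $d(\lambda_*)=\max_{\lambda\in\D}d(\lambda)$.

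The heart of the argument is the primal recovery step. Let $x^*$ be the primal optimum with value $p^*:=\|x^*-x_0\|^2$, and recall strong duality gives $d(\lambda_*)=p^*$. Because $x^*$ is feasible and $\barlambda\ge0$, we have $\mathcal L(x^*,\barlambda)=p^*+\sum_i\barlambda^{(i)}h_i(x^*)\le p^*$. Since $\mathcal L(\cdot,\barlambda)$ is $2$-strongly convex with minimizer $x^*_{\barlambda}$ and minimal value $d(\barlambda)$, strong convexity yields $\|x^*-x^*_{\barlambda}\|^2\le \mathcal L(x^*,\barlambda)-d(\barlambda)\le p^*-(p^*-4\eps')=4\eps'$. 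Combining with $\|\bx-x^*_{\barlambda}\|^2\le\teps$ from Lemma~\ref{lemma:GradOracleLemmaHD}(ii), the triangle inequality gives $\|\bx-x^*\|\le 2\sqrt{\eps'}+\sqrt{\teps}$.

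It remains to turn this distance bound into the two claimed guarantees, using the a priori estimate $\|x^*-x_0\|\le mRG$, which follows from the KKT stationarity condition $x^*-x_0=-\sum_i\lambda_*^{(i)}\nabla h_i(x^*)$ together with $|\lambda_*^{(i)}|\le R$ and $\|\nabla h_i\|\le G$. For feasibility, $G$-Lipschitzness and $h_i(x^*)\le0$ give $h_i(\bx)\le h_i(x^*)+G\|\bx-x^*\|\le G(2\sqrt{\eps'}+\sqrt{\teps})\le\eps$. For the objective, squaring $\|\bx-x_0\|\le\|x^*-x_0\|+\|\bx-x^*\|$ gives $\|\bx-x_0\|^2\le p^*+2\|x^*-x_0\|(2\sqrt{\eps'}+\sqrt\teps)+(2\sqrt{\eps'}+\sqrt\teps)^2\le p^*+6\eps$ for the chosen $\eps',\teps$; since $p^*\le\|x-x_0\|^2$ for every feasible $x$, this is the first claim. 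The runtime then follows by multiplying the $T=O(\tfrac m\theta\log(mR/\eps))$ rounds by the per-round cost: each round either invokes Alg.~\ref{alg:MinimizingPrimalHighDim} at cost $O(nm^{3/2}\log(m/\teps))=O(nm^{3/2}\log(m/\eps))$ (Lemma~\ref{lemma:GradOracleLemmaHD}) or the separation oracle together with the $M_t$-update of cost $\tau_{\rm{CP}}(m)$, which yields the two stated terms.

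The step I expect to be most delicate is the primal-recovery inequality. The dual need not be strongly concave, so a small dual value gap does \emph{not} control $\|\barlambda-\lambda_*\|$, and one cannot simply push closeness of multipliers through Lemma~\ref{lemma:GradientLemmaHighDim}. The key is to bypass the dual iterate entirely and instead exploit $2$-strong convexity of the Lagrangian in $x$, converting the value gap directly into $\|x^*_{\barlambda}-x^*\|^2\le4\eps'$ via the sandwich $d(\barlambda)\le\mathcal L(x^*,\barlambda)\le p^*$. Making the constants line up so that the required $\eps'$ back-substitutes to exactly $\teps=\eps^4/(256(mRG)^6)$ also hinges on the bound $\|x^*-x_0\|\le mRG$; I would verify this chain of substitutions carefully, since it is what ties the intermediate accuracy $\eps'$ to the advertised input parameter $\teps$.
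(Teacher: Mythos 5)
Your proposal is correct and reaches the same conclusion, but the central primal-recovery step is genuinely different from the paper's. The paper converts the dual value gap $d(\lambda_*)-d(\barlambda)\leq 4\beps$ into \emph{gradient} closeness via Lemma~\ref{lemma:GradIneqSmooth} and the $mG^2$-smoothness of $d$, then uses the identity $\nabla d(\lambda)=\bh(x^*_\lambda)$ to conclude $\|\bh(\bx)-\bh(x^*)\|$ is small (giving feasibility), and handles the objective by a separate complementary-slackness computation bounding $-\barlambda^\top\bh(\bx)$ followed by the Lagrangian inequality $\|\bx-x_0\|^2\leq\|x-x_0\|^2+\barlambda^\top\bh(x)-\barlambda^\top\bh(\bx)+\teps$. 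You instead sandwich $d(\barlambda)\leq\mathcal L(x^*,\barlambda)\leq\|x^*-x_0\|^2=d(\lambda_*)$ and invoke the $2$-strong convexity of $\mathcal L(\cdot,\barlambda)$ to get $\|x^*_{\barlambda}-x^*\|^2\leq 4\eps'$ directly, from which both feasibility (via Lipschitzness of $h_i$) and near-optimality (via the triangle inequality and the KKT bound $\|x^*-x_0\|\leq mRG$) follow. Your route is more elementary --- it avoids the smoothness of $d$ and Lemma~\ref{lemma:GradIneqSmooth} entirely --- and it yields a strictly stronger intermediate conclusion, namely that $\bx$ is close to $x^*$ in Euclidean norm rather than only in constraint values; the paper's route avoids needing the a priori bound on $\|x^*-x_0\|$ and is the form that transfers verbatim to the norm-ball duality argument of Appendix~\ref{appendix:DualProj}. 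One caveat applies equally to both arguments: the final numerical constants (e.g., your bound $G\sqrt{\teps}\leq\eps/2$, and the paper's step ``$\teps\leq\beps$'') implicitly require $\eps\lesssim (mRG)^2$, i.e., that $G$ is not vanishingly small relative to $\eps$; this is a shared looseness of the paper's own proof, not a defect specific to your argument.
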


Let us discuss two  choices of a  cutting plane method:\\
\textbf{Ellipsoid method:} In this case $\theta = O(1/m)$ and $\tau_{\rm{CP}}(m) = O(m^2)$. Thus, when used within our scheme the total runtime is $O\left( n m^{3.5} \log(m/\eps) +m^4 \log(m/\eps)\right)$.\\
\textbf{Vaidya's method:} In this case $\theta = O(1)$ and $\tau_{\rm{CP}}(m) = O(m^{2.5})$. Thus, when used within our scheme the total runtime is $O\left( n m^{2.5} \log(m/\eps) +m^{3.5}\log(m/\eps) \right)$.
\begin{proof}[Proof of Thm.~\ref{theorem:Main}]
First notice that we may apply the cutting plane method of Alg.~\ref{alg:CuttingPlane} since $\D$ is an $\ell_{\infty}$-ball of diameter $R$, so we can set $M_1: = \D$, and $\lambda_1$ as its center.
Moreover, according to Corollary~\ref{corollary:SetSolutions} for any $\eps\geq 0$ there exists $r \propto \eps/m$ such that an $\ell_{\infty}$-ball of radius $r$ is contained in the set of $\eps$-optimal solutions to the dual problem in $\D$. Let us denote $\beps: = \left(\frac{\eps}{{4}mRG}\right)^2$, and notice that we can write
$\teps = \left(\frac{\beps}{mRG}\right)^2$.
Now by setting $\teps $ as accuracy parameter to Alg.~\ref{alg:MinimizingPrimalHighDim}, it follows from Lemma~\ref{lemma:GradOracleLemmaHD} that it generates gradient and value oracles with the following accuracies,
$
\eps_g = \sqrt{mG^2\teps} \leq \frac{\beps}{R\sqrt{m}}~;~~
\text{ and }
~\eps_v \leq \teps\leq \beps~.
$ 
Now applying Lemma~\ref{lemma:CuttingApproximate} with these accuracies implies that within $T = \frac{m}{\theta}\log(mR/\eps)$ calls to these approximate oracles it outputs a solution $\barlambda$ such that $d(\barlambda)\geq d(\lambda_*)-4\beps$.
Next we show that this guarantee on the dual translates to a guarantee for $\bx$ w.r.t. the original primal problem \eqref{problem}.  We will require the following lemma, proved in Appendix \ref{proof:lemma4.4}.
\begin{lemma} \label{lemma:GradIneqSmooth}
Let $F:\reals^m \rightarrow \reals$ be an $L$-smooth and concave function, and let $\lambda_* =\argmax_{\lambda\in \D }F(\lambda)$. Also let $\D$ is a convex subset of ~$\reals^m$. Then,
$ \| \nabla F(\lambda) -\nabla F(\lambda_*)\|^2 \le 2L \left( F(\lambda_*) - F(\lambda)\right), \quad \forall \lambda\in \D~.$
\end{lemma}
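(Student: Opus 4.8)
The plan is to combine two ingredients: a standard ``co-coercivity'' inequality that holds for any smooth concave function, and the first-order optimality condition for the constrained maximizer $\lambda_*$ over the convex set $\D$. The point I want to exploit is that although $\lambda_*$ maximizes $F$ only over $\D$ (so its gradient need not vanish), the geometry of a convex feasible region still forces $\nabla F(\lambda_*)$ to point ``outward'', which is exactly what is needed to eliminate the linear term.

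First I would recall the characterization of $L$-smoothness together with concavity: for any $L$-smooth concave $F$ and any $x,y\in\reals^m$,
\[
F(y) \le F(x) + \nabla F(x)^\top (y-x) - \frac{1}{2L}\|\nabla F(y)-\nabla F(x)\|^2 .
\]
This follows from the analogous statement for $L$-smooth convex functions applied to $-F$ (e.g.\ Nesterov's book). Applying it with $x=\lambda_*$ and $y=\lambda$ gives
\[
F(\lambda) \le F(\lambda_*) + \nabla F(\lambda_*)^\top(\lambda-\lambda_*) - \frac{1}{2L}\|\nabla F(\lambda)-\nabla F(\lambda_*)\|^2 .
\]

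Next, since $\lambda_* = \argmax_{\lambda\in\D}F(\lambda)$ and $\D$ is convex, the first-order optimality (variational inequality) condition for constrained maximization yields $\nabla F(\lambda_*)^\top(\lambda-\lambda_*)\le 0$ for every $\lambda\in\D$. Substituting this into the previous display removes the linear term, leaving
\[
F(\lambda) \le F(\lambda_*) - \frac{1}{2L}\|\nabla F(\lambda)-\nabla F(\lambda_*)\|^2 ,
\]
and rearranging gives the claim $\|\nabla F(\lambda)-\nabla F(\lambda_*)\|^2 \le 2L\,(F(\lambda_*)-F(\lambda))$.

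The step I expect to require the most care is the treatment of the constrained optimum: one must resist the temptation to set $\nabla F(\lambda_*)=0$, since $\lambda_*$ may lie on the boundary of $\D$. Convexity of $\D$ is precisely what makes the inequality $\nabla F(\lambda_*)^\top(\lambda-\lambda_*)\le 0$ valid for all feasible $\lambda$, and this is the only place where the hypothesis that $\D$ is convex enters. Everything else is a one-line consequence of the smoothness/concavity inequality.
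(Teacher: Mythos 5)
Your proof is correct and follows essentially the same route as the paper: the paper's argument introduces the auxiliary function $C(\lambda)=F(\lambda)-\nabla F(\lambda_*)^\top(\lambda-\lambda_*)$ and applies a gradient-ascent step to it, which is precisely a from-scratch derivation of the co-coercivity inequality you cite, and then both arguments finish by invoking the variational inequality $\nabla F(\lambda_*)^\top(\lambda-\lambda_*)\le 0$ for the constrained maximizer. The only difference is that you quote the standard smooth-concave inequality as a known result where the paper re-derives it inline.
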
 
Using the above lemma together with the $mG^2$-smoothness of $d(\cdot)$ (Lemma~\ref{lemma:GradientLemmaHighDim}) implies,
\begin{align}\label{eq:Opt2GradHD}
 \| \nabla d(\barlambda) -\nabla d(\lambda_*)\| \leq \sqrt{8mG^2\beps}~.
\end{align}
Now, using $\bg: = \bh(\bx)$ (Alg.~\ref{alg:MinimizingPrimalHighDim}), and $\|\bg-\nabla d(\barlambda)\|\leq \sqrt{mG^2\teps}$   (Lemma~\ref{lemma:GradOracleLemmaHD}), as well as $\nabla d(\lambda_*) = \bh(x^*)$  (Lemma~\ref{lemma:GradientLemmaHighDim}), we conclude from Eq.~\eqref{eq:Opt2GradHD}: 
\begin{align} \label{eq:GradCloseHD}
&
\|\bh(\bx) -  \bh(x^*)\| 
\leq \|\bh(\bx)  -  \nabla d(\barlambda)\| + \|\nabla d(\barlambda) - \bh(x^*)\|  
\nonumber\\
&
=\|\bg-\nabla d(\barlambda)\| + \| \nabla d(\barlambda) -\nabla d(\lambda_*)\| 
\leq
\sqrt{16 mG^2\beps}~,
\end{align}
where we used $\teps\leq \beps$.
The above implies that $\forall i\in[m]$, 
$
h_i(\bx)  =  h_i(x^*) + (h_i(\bx)-h_i(x^*)) 
\leq
 h_i(x^*) + |h_i(\bx)-h_i(x^*)| 
 \leq
 0 + \| \bh(\bx)-\bh(x^*)\|_\infty 
 \leq
 mG\sqrt{16\beps} 
 \leq
 \eps,$ 
 where the second inequality uses the feasibility of $x^*$, and the last line uses the definition of $ \beps$ (we assume $R\geq1$).
 This concludes the first part of the proof. Moreover, from Eq.~\eqref{eq:GradCloseHD} we also get,
\begin{align} \label{eq:KKT_HD}
&
-\barlambda^\top \bh(\bx)  
\nonumber\\
&
= 
-\barlambda^\top (\bh(\bx) -\bh(x^*)) - (\barlambda - \lambda_*)^\top \bh(x^*) -  (\lambda_*)^\top \bh(x^*) 
\nonumber\\
&
\leq
\sqrt{16mG^2\beps}\|\barlambda\| + \nabla d(\lambda_*)^\top (\lambda_*-\barlambda)  + 0 
\nonumber\\
&
\leq
 mG\sqrt{16\beps}\|\barlambda\|_\infty + d(\lambda_*) - d(\barlambda) 
\nonumber\\
&
\leq
mGR\sqrt{16\beps} + 4\beps 
\leq
5\eps~.
\end{align}
where the first inequality uses Eq.~\eqref{eq:GradCloseHD} as well as $\bh(x^*)  = \nabla d(\lambda_*)$ (Lemma~\ref{lemma:GradientLemmaHighDim}) and complementary slackness, which implies $(\lambda_*)^\top  \bh(x^*) = 0$; the second inequality  uses the concavity of $d(\cdot)$ implying that $d(\lambda_*) - d(\barlambda)  \geq \nabla d(\lambda_*)^\top (\lambda_*-\barlambda)$, and the last line uses the definition of $\beps$ as well as $\beps\leq \eps$. Using Eq.~\eqref{eq:KKT_HD} together with $\teps$-optimality of $\bx$ with respect to $\mathcal L(\cdot,\barlambda)$ (Alg.~\ref{alg:MinimizingPrimalHighDim}) implies that 
$\forall x\in\K: =\{ x: h_i(x)\leq 0;~\forall i\in[m] \}$ we have,
$
\|\bx-x_0\|^2 
\leq
 \|x-x_0\|^2 + \barlambda^\top \bh(x) -  \barlambda^\top \bh(\bx) +\teps
 \leq 
 \|x-x_0\|^2 + 6\eps~,
 $
and we used $\teps\leq \eps$, and
$\barlambda\geq 0, \bh(x)\leq0$.
This concludes  the proof.\\
\textbf{Runtime:} for a single $t\in[T]$ we invoke Alg.~\ref{alg:MinimizingPrimalHighDim}, and its runtime is $O(nm^{1.5}\log(m/\eps))$ (Lemma~\ref{lemma:GradOracleLemmaHD}), additionally $\tau_{CP}$ for the update. Multiplying this by $T$ we get a runtime of $O( n m^{2.5}{\theta^{-1}}\log^2(m/\eps)+\tau_{CP}m\theta^{-1}\log(m/\eps) )$.
Also, every call to the separation oracle for $\D$ takes $O(m)$ 
which is negligible 
compared to computing the gradient and value oracles.
\end{proof}
Note that inside our algorithm we could use not only Vaidya's and Ellipsoid methods, but any other cutting plane scheme. For example, the faster cutting plane methods proposed by \citet{lee2015faster}, \citet{jiang2020improved} can be used as well.

\section{EXPERIMENTAL EVALUATION}
\label{sec:Experiment}
\subsection{Synthetic Problem}
\looseness -1 \looseness -1 We first demonstrate the performance of our approach on synthetic problems of projection onto a randomly generated quadratic set and onto their intersection.
\begin{align}\label{eq:experiment}
\min_{x\in\R^n} & ~\|x-x_p\|^2\\
\text{subject to } & ~(x-x_i)^TA_i(x-x_i) \leq 0,~ i=1,\ldots,m.\nonumber
\end{align}
The matrices $A_i$ are generated randomly in such a way that they are positive definite and have norm equal to $1$.  We compare our approach with the Interior Point Method (IPM) from the MOSEK solver, as well as with SLSQP from the \textit{scipy.optimize.minimize} package. For Algorithm \ref{alg:MinimizingPrimalHighDim}, to solve the primal subproblems we use the AGD method as described before. We select the smoothness parameter $L$ is based on the norms of the matrices $A_i,$ and tune the parameter $R$ empirically using the doubling trick.
The run-times are shown in 
Table \ref{Table1}. 
The run-times are averaged over 5 runs of the method on the random inputs. The accuracy is fixed to $10^{-4}.$ 
\begin{table*}[!ht]\label{Table1}
    \centering
    \caption{Run-times (in seconds). Hereby, $n$ is the dimensionality of the problem, the number of constraints is $2$.} 
    \vspace{0.1 in}
    \begin{tabular}{|c|c|c|c|c|c|c|c|c|c|}
    \hline
     m = 2, n: & 10 & 100 & 500 & 1000 & 2000 & 5000 & 8000 & 10000 & 12000\\
    \hline
       SLSQP 
      & 0.011
      & 0.384
      & 3.481
      & 9.757
      & 47.143
      & 573.980
     & -
     & -
     & -
       \\
       IPM 
       & 0.059
       & 0.073
       & 0.577
       & 2.427
       & 11.850
       & 118.414
       & 408.137
       & 751.216
       & 901.878
    \\
          Fast Proj 
      & 0.416 
      & 2.429 
      & 3.746 
      & 15.504
      & 22.482 
      & 141.704 
      & 350.681 
      & 547.240
      & 666.231
       \\
       ADMM 
       & 23.761 
       & 92.836
       & 285.383
       & - 
       & - 
       & - 
       & - 
       & -
       & -\\
       \hline
    \end{tabular}
    \label{tab:runtimes}
\end{table*}
The results demonstrate a substantial performance improvement obtained by our fast projection approach as the dimensionality increases. The runtime in seconds is not a perfect performance measure, but is the most reasonable measure we could think of. Comparing the number of iterates hides the complexity of each iteration which might be huge for interior point methods.

\subsection{Learning the Kernel Matrix in Discriminant Analysis via QCQP \citep{kim2006optimal, ye2007learning,  basu2017large}} 
We next consider an application in multiple kernel learning. Consider a standard binary classification setup where
 $\X$ -- a subset of $\R^n$ -- denotes the input space, and $\Y = \{-1,+1\}$ denotes the output (class label) space. 
We assume that the examples are independently drawn from a fixed unknown probability distribution over $\X\times\Y.$
We model our data with positive definite {\em kernel} functions \cite{ScholkopfBook}.
In particular, for any $x_1,\ldots,x_n \in \X,$ the \textit{Gram} matrix, defined by
 $G_{jk} = K(x_j,x_k)$ is positive semi-definite.  
Let $X = [x_1^+,\ldots,x_{n_+}^+,x_1^-,\ldots,x_{n_-}^-]$ be a data matrix of size $n = n_+ + n_- $, where $\{x_1^+,\ldots,x_{n_+}^+\}$ and $\{x_1^-,\ldots,x_{n_-}^-\}$ are the data points from positive and negative classes.  For binary classification, the problem of kernel learning for discriminant analysis seeks, given a set of $p$ kernel matrices $G^i = K^i(x_j,x_k), x_j,x_k\in X, i\in[p], G^i\in\R^{n\times n}$ to learn an optimal linear combination $G\in \mathcal G = \left\{ G\,|\, G = \sum_{i=1}^{p}\theta_i G^i, \sum_{i=1}^{p}\theta_i  =  1, \theta_i\geq 0 \right\}$.
This problem was introduced by \citet{fung2004fast}, reformulated as an SDP by \citet{kim2006optimal}, and as a much more tractable QCQP by \citet{ye2007learning}. 
 Latter approach learns an optimal kernel matrix $\tilde G \in \tilde {\mathcal G} = \left\{\tilde G\,|\,\tilde G = \sum_{i=1}^{p}\theta_i\tilde G^i, \sum_{i=1}^{p}\theta_i r_i  =  1, \theta_i\geq 0 \right\},$ 
where $\tilde G^i = G^iPG^i, r_i = \text{Trace}(\tilde G^i)$, $P = I - \frac{1}{n} \textbf{1}_n \textbf{1}_n^T,$ and $\textbf{1}_n$ is the vector of all ones of size $n$,  
 by solving the following convex QCQP
 \begin{align}\label{problem:kernel}
 \max_{\beta, t} & ~-\frac{1}{4} \beta^T\beta + \beta^T a -\frac{\lambda}{4}t\\
 \text{subject to }&~ t \geq \frac{1}{r_i}\beta^T\tilde G^i\beta,~ i = 1,\ldots, p,
 \end{align}
\looseness -1 where
$a = [1/n^+, \ldots, 1/n^+,-1/n^-, \ldots, -1/n^-]\in \R^n, \beta \in \R^n$. Hereby $\lambda$ is a regularization parameter that we set to $\lambda = 10^{-4}.$ The optimal $\theta$ corresponds to the dual solution of the above problem (\ref{problem:kernel}). Note that in this application, the number of data points $n$ is much larger than the number of constraints (i.e., the number of kernel matrices), making it ideally suited for our approach.
We run our algorithm applied for this problem over $\beta$ with fixed $t = 5\cdot 10^{-8}$.    Then the problem becomes strongly convex: 
$
        \arg\max_{\beta} -\frac{1}{4}\beta^T\beta + \beta^T a + \lambda^T t = \arg\max_{\beta} -\frac{1}{4}(\beta^T\beta  - 4\beta^T a + 4a^Ta) = \arg\max_{\beta} -\frac{1}{4}\|\beta - 2a\|^2_2.
    $
We use the \textit{doc-rna}  dataset \cite{doc_rna} from LIBSVM  with $n=4000, 10000, 11000$ data points and compare the results and the running time with the IPM.  We focus on learning a convex combination of $m$ Gaussian Kernels $K(x,z) = \sum_{i = 1}^{m} \theta_i e^{-\|x-z\|^2/\sigma_i^2}$ with different bandwidth parameters $\sigma_i$, chosen uniformly on the logarithmic scale over the interval $[10^{-1}, 10^2],$ as in \cite{kim2006optimal, ye2007learning}. Results are shown in  Table \ref{Table2} below.
\begin{table}[ht]\label{Table2}
    \centering
    \caption{Run-times (in seconds). Hereby, $n$ is the number of data points  (dimensionality), the number of kernels is $3$ (number of constraints). For large problems, our approach outperforms IPM.}
    \vspace{0.1 in}
    \begin{tabular}{|c|c|c|c|c|}
    \hline
     m = 3, n: & 4000 & 10000 & 11000\\
    \hline
        Fast Proj 
        & 230.281
        & 768.086
        & 1216.9440
       \\
        IPM 
        & 75.133
        & 906.631
        & 1302.088
        \\
       \hline
    \end{tabular} 
    \label{tab:runtimes}
\end{table}
Moreover, for the Kernel Learning problem with
$\teps = 500 \eps^2$,  we present the results for IPM and Fast Projection algorithms for $ m = 3, n= 11000$ dependent on the target accuracy in Table 3.
\begin{table}[!ht]
    \centering
    \caption{Run-times (in seconds). Hereby, $\varepsilon$ is the target accuracy in objective value, the number of kernels is $3$ (number of constraints). For large problems and smaller accuracies, our approach outperforms IPM. }
    \vspace{0.1 in}
    \begin{tabular}{|c|c|c|c|}
    \hline
     $\eps $ & $ 10^{-6}$ & $10^{-7}$ & $10^{-8}$  \\
    \hline
      Fast Proj 
      & 83.2381 & 519.7166 & 1216.9440\\
       IPM 
      & 1011.5410 & 1070.3363 & 1302.0882\\
       \hline
    \end{tabular}
        \label{tab:runtimes}
\end{table}
Note that quadratic constraints do not satisfy Lipschitz continuity assumption on the whole $\R^n$.  However, the Lipschitz  continuity holds on any compact set inside $\R^n$. Since the AGD algorithm keeps the iterates on the compact set, this is enough to guarantee the Lipschitz continuity. Moreover, the Lipschitz constant $G$ itself is needed only to specify the accuracy for AGD $\teps.$ It only affects the runtime of AGD logarithmically. The parameter $H$ is not needed to be known since it influences only the upper bound on the runtime of the ellipsoid method. 
\footnote{The experiments were run on a machine with Intel Core i7-7700K, 64Gb RAM.}

\section{CONCLUSION}
We proposed a novel method for fast projection onto smooth convex constraints.
We employ a primal-dual approach, and combine cutting plane schemes with Nesterov's accelerated gradient descent. We analyze its performance and prove its effectiveness in  high-dimensional settings with a small number of constraints. The results are generalizable to any strongly-convex objective with smooth convex constraints. 
Our work demonstrates applicability of cutting plane algorithms in the field of Machine Learning and can potentially improve efficiency of solving  high dimensional constrained optimization problems. Enforcing constraints can be of crucial importance when ensuring reliability and safety of machine learning systems. 

\section*{Acknowledgements}
We thank the reviewers for the helpful comments. This project received funding from the European Research Council (ERC) under the European Union’s Horizon 2020 research and innovation programme
grant agreement No 815943, the Swiss National Science Foundation under the grant SNSF 200021\_172781, and under NCCR Automation under grant agreement 51NF40 180545, as well as the Israel Science Foundation (grant No. 447/20).
\looseness=-1


\addcontentsline{toc}{section}{Bibliography}
\bibliographystyle{apalike}

\bibliography{bibliography}


\newpage

\appendix
\onecolumn

\section{Single constraint case   analysis}\label{appendix:A}
\paragraph{Formal description:}
Our method for the case of a single constraint is described in Alg.~\ref{alg:MinimizingPrimal}, and Alg.~\ref{alg:1DimConc} .
We can think of Alg.~\ref{alg:MinimizingPrimal} , as an oracle $\O: \reals_{+}\mapsto \reals^n\times \reals$, that receives $\lambda\geq 0$, and outputs $(x,g,v)$ such that $g$ is an $\tilde\eps$-accurate estimate of the $\nabla d(\lambda)$.
Alg.~\ref{alg:1DimConc} invokes the oracle $\O$ in every round, and uses the resulting gradient estimates in order to solve the dual problem using one-dimensional bisection. Finally, Alg.~\ref{alg:1DimConc}  chooses the  iterate $\lambda_t$ that approximately maximizes the dual problem, and outputs the corresponding $x_t\in \reals^n$ as the solution to the original projection problem (Eq.~\eqref{eq:problem_1}).

\begin{minipage}[c]{0.48\linewidth}
\vspace{-0.8 cm}
\begin{algorithm}[H]
\caption{$\O$-an approximate oracle for $\nabla d(\cdot)$ }
\label{alg:MinimizingPrimal}
\begin{algorithmic}
\STATE \textbf{Input}:  $\lambda\geq 0$,  target accuracy $\teps$
\STATE Compute $x_{\lambda}$, an $\teps$-optimal solution of,
$$
\min_{x\in\reals^n} L(x,\lambda): = \|x-x_0\|^2 + \lambda h(x)~.
$$
\STATE \textbf{Method:}  Nesterov's AGD (Alg.~\ref{alg:Nes}) with\\ $\alpha=2$,
~ $\beta = 2+\lambda L$, \\and~
$T = O(\sqrt{\beta}\log(\beta B/\teps))$~.
\STATE \textbf{Let}:~~$v:= \|x_{\lambda}-x_0\|^2 + \lambda  h(x_{\lambda})$, \\~~~~~~~~
$g: = h(x_{\lambda})$
\STATE \textbf{Output}: $(x_{\lambda}, g,v)$
\end{algorithmic}
\end{algorithm}
\end{minipage}
\begin{minipage}[c]{0.48\linewidth}
\begin{algorithm}[H]
\caption{
Bisection
}
\label{alg:1DimConc}
\begin{algorithmic}
\STATE \textbf{Input}:  $\eps.$ { Set}: $\lambda_{\min} =0$,  $\lambda_{\max} = R$,
\STATE  $\tilde{\eps} 
$ as in Thm. \ref{theorem:Main}, $T = O( \log_2(R G/\eps))$
\FOR{$t=1 \ldots T$ }
\STATE {Calculate:} $\lambda_t  = (\lambda_{\max} + \lambda_{\min})/2$
\STATE {Calculate:} $(x_t, g_t,v_t) \gets \O(\lambda_t,\tilde{\eps})$ using Alg.\ref{alg:MinimizingPrimal}
\IF{$g_t> 0$}
\STATE   {Update:}  $\lambda_{\min}\gets \lambda_t$
\ELSE   
\STATE {Update:}  $\lambda_{\max}\gets \lambda_t$
\ENDIF
\ENDFOR
\STATE Set $\tau = \arg\max_{t\in[T]}v_t$
\STATE \textbf{Output}: $(x_{\tau}, \lambda_{\tau})$
\end{algorithmic}
\end{algorithm}
\end{minipage}


\subsection{Using Nesterov's Method within Alg.~\ref{alg:MinimizingPrimal}, Alg.~ \ref{alg:MinimizingPrimalHighDim}}
\looseness -1 Alg.~\ref{alg:MinimizingPrimal} requires a fast approximate method to solving $\min_{x\in\reals^n}L(x,\lambda)$.
This  is a $2$-strongly-convex and $(2+\|\lambda\|_1 L)$-smooth \emph{unconstrained} problem, which suggests using fast first order method of Nesterov to do so. The benefit here is that the runtime is only linear in $n$ and logarithmic in the accuracy $\teps$.
Next we describe Nesterov's method and its guarantees.
\begin{theorem}[Nesterov \cite{nesterov1998introductory}]\label{thm:nesterov}
\looseness -1 Let $F$ be $\alpha$-strongly-convex and $\beta$-smooth with an optimal solution $x^*$, and let $\kappa:= \beta/\alpha$. Then applying Nesterov's AGD with $T = O(\sqrt{\kappa}\log(\beta\|x_0-x^*\|^2/\eps)$ ensures,
$$
F(y_T) - F(x^*) \leq \eps~.
$$ 
\end{theorem}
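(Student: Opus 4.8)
The plan is to establish the linear convergence rate of the accelerated scheme (Alg.~\ref{alg:Nes}) via Nesterov's \emph{estimate sequence} technique. First I would record the two elementary consequences of the assumptions that drive everything: the descent inequality from $\beta$-smoothness, namely that the gradient step $y_{t+1} = x_t - \frac{1}{\beta}\nabla F(x_t)$ satisfies $F(y_{t+1}) \le F(x_t) - \frac{1}{2\beta}\|\nabla F(x_t)\|^2$, and the lower bound from $\alpha$-strong-convexity, $F(x) \ge F(x_t) + \nabla F(x_t)^\top(x - x_t) + \frac{\alpha}{2}\|x - x_t\|^2$ for all $x$. These are the only two structural facts about $F$ that the argument uses.

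Next I would introduce a sequence of simple quadratic models $\phi_t(x) = \phi_t^* + \frac{\gamma_t}{2}\|x - v_t\|^2$ together with scalars $\eta_t \in (0,1]$, and maintain two invariants: (i) $\phi_t(x) \le (1 - \eta_t)F(x) + \eta_t \phi_0(x)$ for all $x$ (the estimate-sequence property), and (ii) $F(y_t) \le \phi_t^*$. Defining $\phi_{t+1}(x) = (1 - a_t)\phi_t(x) + a_t[F(x_t) + \nabla F(x_t)^\top(x - x_t) + \frac{\alpha}{2}\|x - x_t\|^2]$, invariant (i) is preserved automatically with $\eta_{t+1} = (1 - a_t)\eta_t$, since the bracketed term lower-bounds $F$. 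Taking the curvatures at their fixed point $\gamma_t \equiv \alpha$ forces $a_t \equiv \sqrt{\alpha/\beta} = 1/\sqrt{\kappa}$ and hence $\eta_t = (1 - 1/\sqrt{\kappa})^t$; a short computation identifies the minimizer $v_{t+1}$ of $\phi_{t+1}$ as an explicit affine combination of $v_t$, $x_t$ and $\nabla F(x_t)$, and shows that the query points are exactly the momentum combinations of consecutive $y$-iterates used in Alg.~\ref{alg:Nes}. This is precisely where the coefficient $\frac{\sqrt{\kappa}-1}{\sqrt{\kappa}+1}$ emerges.

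With both invariants in hand the conclusion is immediate: combining (i) evaluated at $x^*$ with (ii) yields $F(y_t) - F(x^*) \le \phi_t^* - F(x^*) \le \eta_t(\phi_0(x^*) - F(x^*))$. Choosing $\phi_0(x) = F(x_0) + \frac{\alpha}{2}\|x - x_0\|^2$ and bounding $\phi_0(x^*) - F(x^*) = F(x_0) - F(x^*) + \frac{\alpha}{2}\|x_0 - x^*\|^2 \le \frac{\beta + \alpha}{2}\|x_0 - x^*\|^2 = O(\beta\|x_0 - x^*\|^2)$ via $\beta$-smoothness gives $F(y_T) - F(x^*) \le (1 - 1/\sqrt{\kappa})^T \cdot O(\beta\|x_0 - x^*\|^2)$. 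Using $(1 - 1/\sqrt{\kappa})^T \le e^{-T/\sqrt{\kappa}}$ and solving for the right-hand side to fall below $\eps$ yields the stated iteration count $T = O(\sqrt{\kappa}\log(\beta\|x_0 - x^*\|^2/\eps))$.

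I expect the main obstacle to be the propagation of invariant (ii), $F(y_{t+1}) \le \phi_{t+1}^*$, through the induction. This requires expanding $\phi_{t+1}^*$ using the recursion for the optimal value of a convex combination of quadratics, substituting the descent inequality for $F(y_{t+1})$, and checking that all cross terms cancel exactly under the chosen parameters. It is precisely this algebraic matching that dictates the $1/\beta$ step size and the momentum coefficient, and lining up the constants is the delicate part of the argument.
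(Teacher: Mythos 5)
Your outline is correct: the paper does not prove this statement but simply cites Nesterov's textbook, and your estimate-sequence argument (with $\gamma_t\equiv\alpha$, $a_t\equiv 1/\sqrt{\kappa}$, the two invariants, and the bound $\phi_0(x^*)-F(x^*)\le\frac{\beta+\alpha}{2}\|x_0-x^*\|^2$) is precisely the proof given in that reference, including the correct derivation of the momentum coefficient $\frac{\sqrt{\kappa}-1}{\sqrt{\kappa}+1}$ and the step size $1/\beta$. The only part left implicit is the algebraic verification that invariant (ii) propagates, which you correctly flag as the place where the parameter choices are forced; carrying that computation out completes the argument with no surprises.
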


\newpage
\section{Handling the Bound on $R$}
\subsection{Obtaining a Bound on $R$ in Two Special Cases}\label{appendix:B}
\subsubsection{Case 1: Single Constraint}
Assume that we are in the case of a single smooth constraint $h(\cdot)$.
In this case it is common and natural to assume that 
 that the gradient of the constraint is lower bounded on the boundary i.e., $\min_{x:h(x)=0}\|\nabla h(x) \| \geq Q$  (see e.g.,~\cite{mahdavi2012stochastic,yang2017richer,levy19projection}). 
  This immediately implies a bound on $\lambda_*$ as shown in the next lemma, 
  \begin{lemma}\label{lemma:OndimBoundDual}
Assume that $\min_{x:h(x)=0}\|\nabla h(x) \| \geq Q$, and that  $\min_{x: h(x)\leq 0} \|x-x_0\| \leq B$. Then the optimal solution to the dual problem $\lambda_*$ is bounded, $\lambda_*\leq R: =2B/Q$.
\end{lemma}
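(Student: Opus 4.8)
The plan is to read $\lambda_*$ off from the first-order stationarity condition of the Lagrangian and bound it as a ratio of a distance to a gradient norm. First I would split into two cases according to whether the unconstrained optimum $x_0$ is already feasible. If $h(x_0)\le 0$, then $x_0$ itself solves the projection problem, the constraint is inactive, and complementary slackness forces $\lambda_*=0$, so the claimed bound $\lambda_*\le 2B/Q$ holds trivially (as $R>0$).

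The substantive case is $h(x_0)>0$, where $x_0$ is infeasible. I would first argue that the optimal projection $x^*$ must then lie exactly on the boundary, $h(x^*)=0$: if instead $h(x^*)<0$ the constraint would be slack, complementary slackness would give $\lambda_*=0$, and unconstrained stationarity would force $x^*=x_0$, contradicting $h(x_0)>0$. Given $h(x^*)=0$, I would use the stationarity of $\mathcal{L}(x,\lambda)=\|x-x_0\|^2+\lambda h(x)$ at the optimal pair $(x^*,\lambda_*)$, namely $2(x^*-x_0)+\lambda_*\nabla h(x^*)=0$ (the factor $2$ coming from differentiating $\|x-x_0\|^2$), so that $\lambda_*\nabla h(x^*)=2(x_0-x^*)$. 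Taking Euclidean norms and solving for $\lambda_*$ gives
\[
\lambda_* = \frac{2\,\|x_0-x^*\|}{\|\nabla h(x^*)\|}.
\]
I would then bound the numerator by noting $\|x^*-x_0\|=\min_{x:h(x)\le 0}\|x-x_0\|\le B$, since $x^*$ is the projection and $B$ is the assumed distance bound, and bound the denominator from below by the hypothesis $\|\nabla h(x^*)\|\ge Q$, which applies precisely because $x^*$ lies on $\{x:h(x)=0\}$. Combining yields $\lambda_*\le 2B/Q=R$.

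The only genuinely delicate point is the justification that $x^*$ sits exactly on the boundary in the active case, since the gradient lower bound $\|\nabla h(x^*)\|\ge Q$ is only assumed on the boundary set $\{x:h(x)=0\}$; this is exactly where the case split pulls its weight. Everything else is a one-line norm manipulation, and the factor of $2$ in $R=2B/Q$ is accounted for cleanly by the gradient of the squared-distance objective.
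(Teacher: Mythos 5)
Your proposal is correct and follows essentially the same route as the paper: a case split on whether the constraint is active at the optimum, followed by reading $\lambda_*$ off the KKT stationarity condition $2(x^*-x_0)+\lambda_*\nabla h(x^*)=0$ and bounding $\lambda_* = 2\|x^*-x_0\|/\|\nabla h(x^*)\|\le 2B/Q$. The only cosmetic difference is that you split on the sign of $h(x_0)$ and then deduce $h(x^*)=0$, whereas the paper splits directly on $h(x^*)<0$ versus $h(x^*)=0$; the extra justification you give for why $x^*$ lies on the boundary is a harmless (and slightly more careful) elaboration of the same argument.
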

\begin{proof}
By definition $\lambda_*\geq 0$.
If $h(x^*)<0$ complementary slackness implies $\lambda_*=0$, and the bound holds.
If $h(x^*)=0$, then using the optimality of $x^*$ and $\lambda_*$ implies,
$$
2(x^*-x_0) = -\lambda_* \nabla h(x^*)
$$
Implying that,
$$
|\lambda_*| = \frac{2\|x^*-x_0\|}{\|\nabla h(x^*)\|} \leq R: =\frac{2B}{Q}
$$
\end{proof}

\subsubsection{Case 2: Quadratic Constraints}
Assume we have multiple quadratic constraints, $h_i(x): = x^\top A_i x - c_i\leq 0~,\forall i\in[m]$. Where $A_i$'s are PSD and $c_i$'s are positive. In this case optimality implies that,
$$
\sum_{i=1}^m \lambda_*^{(i)} A_i x^* = x_0-x^*
$$
Multiplying both sides by $x^*$ and using complementary slackness gives,
$$
\sum_{i=1}^m \lambda_*^{(i)} c_i = (x_0-x^*)^\top x^*
$$
Since $\lambda_*^{(i)}$ and $c_i$'s are non-negative this implies that $\forall i\in[m]$,
$$
\lambda_*^{(i)} = \frac{1}{c_i}\lambda_*^{(i)} c_i \leq  \frac{1}{c_i}\sum_{i=1}^m \lambda_*^{(i)} c_i = \frac{1}{c_i} (x_0-x^*)^\top x^* \leq \frac{1}{c_i} B\cdot X^*~,
$$
where $X^*$ bounds the norm of $x^*$, and $B$ satisfies  $\min_{x: h(x)\leq 0} \|x-x_0\| \leq B$. Thus in this case we can take,
$$
R: = \max_{i\in[m]} c_i^{-1}BX^*~.
$$



\subsection{A Generic Way to Handling $R$}
\label{appendix:B2}
An effective and practical strategy to choosing $R$ both in theory and in practice is to  use a standard doubling trick to estimate $R$ ``on the fly", which will only cause a factor of $\sqrt{2}\ln 2$  increase in the total running time. 

The idea is the following: Underestimation of $R$ can be detected by the convergence of the dual solution to the boundary of the dual $\ell_{\infty}$-box with radius $R$. Consequently, one can start with $R_0=1$, and then double the estimation $R$ until the resulting solution does not intersect with the boundary of this $\ell_{\infty}$-box. In the worst case one will only have to increase the estimate $R$ by no more than a  logarithmic number of times,i.e.,no more than $1+\log_2 R$ times. Now note that given a fixed estimation $a$ of the radius $R$,  the total runtime is proportional to $\sqrt{a}\log a$, where the $\log a$ comes from the cutting plane method, and the $\sqrt{a}$ factor comes from the number of the internal iterations $T$ that is defined  inside Alg.~\ref{alg:MinimizingPrimalHighDim}. Thus, upon using doubling, the total runtime increases by no more than a constant factor.

\newpage
\section{Duality of Projections} \label{appendix:DualProj}

Recall than in Section~\ref{sec:DualProject} our goal is to solve the following problem,
\begin{align}\label{eq:problem_NormBallAppendix}
  &\min_{ x\in \R^n:P(x) \leq 1} ~~~~~~~\|x_0  - x\|^2~.
\end{align}
And we assume that we have an oracle $\Pi_*$ that enables to project onto its dual norm ball, i.e., given $y\in\reals^n$, we can compute,
$$
\Pi_*(y): = \argmin_{x\in \R^n:P_*(x) \leq 1}\|y  - x\|^2
$$
where $P_*(\cdot)$ is the dual norm of $P(\cdot)$.

Our main statement (Theorem~\ref{thm:DualProj}) asserts that using $O(\log(1/\eps))$ calls to $\Pi_*(\cdot)$ we can solve Problem~\eqref{eq:problem_NormBallAppendix} up to an $\eps$-approximation, which establishes an efficient conversion  between projection onto norms and their duals.

\textbf{Our approach:} 
 Our starting point is similar to the Lagrangian formulation that we describe in Section~\ref{sec:SingleConstraint}.
 Concretely, the Lagrangian of Problem~\eqref{eq:problem_NormBallAppendix} is,
$$
        \mathcal L(x,\lambda): =\|x_0  - x\|^2  + \lambda (P(x)-1) ~.
$$
Now given $\lambda\geq0$, the dual problem is defined as follows,
$$
d(\lambda) = \min_{x\in\reals^n}\{ \|x_0  - x\|^2  + \lambda (P(x)-1) \}
$$
Now, the key idea is that we can compute the \emph{exact} gradients of $d(\lambda)$ using the dual oracle $\Pi_*(\cdot)$
(which is different from what we do in Sections~\ref{sec:SingleConstraint}, and \ref{sec:HighDimcase}). To see that, we will use the fact that the dual of $P_*(\cdot)$ is $P(\cdot)$, and therefore we can write $P(x) = \max_{y:P_*(y)\leq 1} y^\top x$. Plugging this back into the expression for  $d(\lambda)$ gives,
\begin{align} \label{eq:d_x_y_dual}
    d(\lambda) 
    &= 
    \min_{x\in\reals^n}\max_{y:P_*(y)\leq 1}\{ \|x_0  - x\|^2  + \lambda (x^\top y-1) \} \nonumber\\
    &=
    \max_{y:P_*(y)\leq 1}\min_{x\in\reals^n}\{ \|x_0  - x\|^2  + \lambda (x^\top y-1) \} \nonumber\\
    &=
    \max_{y:P_*(y)\leq 1}\{-\|(\lambda y/2)-x_0\|^2 + \|x_0\|^2 -\lambda \} \nonumber\\
    &= 
    -\lambda +  \|x_0\|^2 - \frac{\lambda^2}{4} \min_{y:P_*(y)\leq 1} \left\|y-\frac{2x_0}{\lambda}\right\|^2 
\end{align}
where the second line follows from strong-duality due to the fact that  $M(x,y):= \|x_0  - x\|^2  + \lambda (x^\top y-1)$ is convex in $x$ and concave in $y$; the third line follows since we have a closed form solution to the internal minimization problem for which 
$x_{\text{opt}} = x_0 - \frac{\lambda}{2}y $, and plugging this expression to obtain the minimal value.

 The above implies that the following is a sub-gradient (sub-derivative) of $d(\cdot)$ for any $\lambda>0$,
\begin{align}\label{eq:DualExactGrad}
\nabla d(\lambda) = -1 - \frac{\lambda}{2}\left \|\Pi_*\left(\frac{2x_0}{\lambda}\right)-\frac{2x_0}{\lambda} \right\|^2
+\left(\Pi_*\left(\frac{2x_0}{\lambda}\right)-\frac{2x_0}{\lambda}\right)^\top x_0
 \end{align}
where we have used the definition of $\Pi_*$.
Thus, Eq.~\eqref{eq:DualExactGrad} shows that we can compute the exact gradients of $d(\cdot)$ using the dual projection oracle $\Pi_*$.

Now, recall that  $d(\cdot)$ is a concave one-dimensional functions, which implies that given an exact gradient oracle to $d(\cdot)$ we can find an $\eps$-optimal solution to $\max_{\lambda \in [0,R]}d(\lambda)$ within $O(R\log(1/\eps))$ calls to the gradient oracle, by using concave bisection algorithm , \citep{JuditskyBook}, that appears in Alg.~\ref{alg:1DimConc} (recall that $R>0$ is a bound on the optimal value of $\lambda^*:= \max_{\lambda \geq 0}d(\lambda)$).
Concretely, after $O(R\log(1/\eps))$ calls to the exact gradient oracle of $d(\cdot)$ we can find a solution $\bar{\lambda}$ such that,
\begin{align}\label{eq:DualGuaranteesNormBall}
d(\lambda^*)-d(\bar{\lambda}) \leq \eps~.
\end{align}
Note that one-dimensional concave bisection is a private case of the more general cutting plane scheme we present in Alg.~\ref{alg:CuttingPlane}.

\textbf{Remark:} In contrast to what we do  in Alg.~\ref{alg:1DimConc}, where we invoke 
Alg.~\ref{alg:MinimizingPrimal} in order to devise \emph{approximate} gradient, value, and primal solution oracles for $d(\cdot)$; here we use $\Pi_*$ in order to devise exact oracles. For completeness we depict our exact oracle  in Alg.~\ref{alg:DualProj}.

\begin{algorithm}[H]
\caption{$\O$-an Exact oracle for $\nabla d(\cdot)$ }
\label{alg:DualProj}
\begin{algorithmic}
\STATE \textbf{Input}:  $\lambda\geq 0$, projection oracle onto dual norm ball $\Pi_*(\cdot)$
\STATE Compute $x_{\lambda}$, an exact solution of,
$$
\min_{x\in\reals^n} L(x,\lambda): = \|x-x_0\|^2 + \lambda (P(x)-1)~.
$$
\STATE \textbf{Method:}   
\STATE \textbf{Compute}~~~$y_\lambda: = \Pi_*\left( \frac{2x_0}{\lambda}\right)$
\STATE \textbf{Let}
$$
v = -\lambda - \frac{\lambda^2}{4}  \left\|y_\lambda-\frac{2x_0}{\lambda}\right\|^2 
~;~~~
g = -1 - \frac{\lambda}{2}\left \|y_\lambda-\frac{2x_0}{\lambda} \right\|^2
+\left(y_\lambda-\frac{2x_0}{\lambda}\right)^\top x_0
~;\qquad\text{\% see Equations~\eqref{eq:d_x_y_dual}, \eqref{eq:DualExactGrad}}
$$
as well as,
$$
x_\lambda = x_0 - \frac{\lambda}{2}y_\lambda
~;\qquad\text{\% see Equation~\eqref{eq:Sol_x_x0_y}}
$$

\STATE \textbf{Output}: $(x_{\lambda}, g,v)$
\end{algorithmic}
\end{algorithm}

\textbf{Translating Dual Solution into Primal Solution:} Next we need to show how to translate the dual solution of Eq.~\eqref{eq:DualGuaranteesNormBall} into a primal solution for Problem~\eqref{eq:problem_NormBallAppendix}. This can be done as follows:
given $\barlambda$ we compute a primal solution as follows, 
\begin{align}
\label{eq:LagrangianXbar}
\bar{x}: = \min_{x\in\reals^n}  \mathcal L(x,\barlambda)
\end{align}
Note that $\mathcal L(x,\barlambda)$ is strongly-convex in $x$, and therefore the solution is unique.
Next we will  show the following closed form expression for $\bar{x}$,
\begin{align}\label{eq:Sol_x_x0_y}
\bar{x} = x_0 - \frac{\barlambda}{2}\Pi_*\left( \frac{2x_0}{\barlambda}\right)
\end{align}
To see this, note that similarly to what we do in Eq.~\eqref{eq:d_x_y_dual}, we can write $\bar{x}$ (from Eq.~\eqref{eq:LagrangianXbar}) as the solution of the following problem,
$$
\min_{x\in\reals^n} \max_{y: P_*(y)\leq 1}M(x,y;\barlambda):=  \min_{x\in\reals^n} \max_{y: P_*(y)\leq 1}\left\{\|x_0  - x\|^2  + \barlambda (x^\top y-1) \right\}
$$
This is a minimax problem, with a solution $\bx$. Now, similarly to what we did 
 in Eq.~\eqref{eq:d_x_y_dual} we can show that the maxmin solution,  $\bar{y}: = \argmax_{y: P_*(y)\leq 1}\min_{x\in\reals^n} M(x,y;\barlambda)$ satisfies,
$$
\bar{y}: = \argmin_{y:P_*(y)\leq 1} \left\|y-\frac{2x_0}{\barlambda}\right\|^2 = \Pi_*\left( \frac{2x_0}{\barlambda} \right)
$$
Next we will show  that in our case one can extract $\bx$ from $\by$ as follows,
$$
\bar{x}: = \argmin_{x\in\reals^n}M(x,\bar{y};\barlambda) = x_0 -\frac{\barlambda}{2}\bar{y} = x_0 - \frac{\barlambda}{2}\Pi_*\left( \frac{2x_0}{\barlambda}\right)
$$
where we have used the fact that $M(\cdot,\by)$ is quadratic in $x$ and therefore admits a closed from solution $x_0 -\frac{\barlambda}{2}\bar{y}$. This establishes Eq.~\eqref{eq:Sol_x_x0_y}.

The above follows due to the next lemma (see proof in Sec.~\ref{sec:Prooflemma:MinimaxMaximinSol}),
\begin{lemma}\label{lemma:MinimaxMaximinSol}
Let $X,Y$ be convex sets, and $M: X\times Y\mapsto \reals$, be strongly-convex in $x$, and concave in $y$.
Also, assume that $\bx$ and $\by$ are respective solutions of the minimax and maximin problems, i.e.,
$$\bx = \argmin_{x\in X}\max_{y\in Y}M(x,y)~;~~~
\&~~~ \by = \argmax_{y\in Y}\min_{x\in X}M(x,y)~,
$$
 and assume that $\bar{x},\bar{y}$ are unique optimal solutions.
Then given  $\bar{y}$ one can compute $\bar{x}$ as follows,
$$
\bar{x} = \argmin_{x\in X}M(x,\bar{y})~.
$$
\end{lemma}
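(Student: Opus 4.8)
The plan is to recognize Lemma~\ref{lemma:MinimaxMaximinSol} as the standard fact that the minimax and maximin solutions of a convex--concave saddle function together form a saddle point, and then to read off the desired characterization of $\bar x$ from one half of the saddle-point inequality. First I would introduce the two marginal functions $\phi(x):=\max_{y\in Y}M(x,y)$ and $\psi(y):=\min_{x\in X}M(x,y)$, so that by hypothesis $\bar x=\argmin_{x\in X}\phi(x)$ and $\bar y=\argmax_{y\in Y}\psi(y)$. Weak duality gives $\psi(y)\le\phi(x)$ for all $x,y$, hence $\max_y\psi(y)\le\min_x\phi(x)$, and the crux is to upgrade this to the equality $\min_{x\in X}\max_{y\in Y}M(x,y)=\max_{y\in Y}\min_{x\in X}M(x,y)$, i.e. strong duality.

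Granting strong duality, the argument is a short sandwich. By the definitions of $\phi$ and $\psi$ we always have $\psi(\bar y)\le M(\bar x,\bar y)\le\phi(\bar x)$; since $\phi(\bar x)=\psi(\bar y)$ by strong duality, all three quantities coincide. In particular $M(\bar x,\bar y)=\psi(\bar y)=\min_{x\in X}M(x,\bar y)$, so $M(\bar x,\bar y)\le M(x,\bar y)$ for every $x\in X$, which is precisely the statement that $\bar x$ minimizes $x\mapsto M(x,\bar y)$ over $X$. Because $M(\cdot,\bar y)$ is strongly convex, this minimizer is unique, and therefore $\bar x=\argmin_{x\in X}M(x,\bar y)$, as claimed. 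The strong convexity hypothesis thus plays a double role: it is what makes the inner minimum attained and unique, and it is what pins down $\bar x$ exactly rather than merely up to the minimizing set.

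The main obstacle is establishing the strong-duality equality for the possibly unbounded sets allowed in the statement. I would invoke Sion's minimax theorem, whose hypotheses (a convex--concave function on a product of convex sets, with one factor compact) are met in the intended application, where $Y=\{y:P_*(y)\le1\}$ is a compact norm ball; the non-compactness of $X=\reals^n$ is harmless because the strong convexity of $M$ in $x$ makes the inner minimization coercive and attained, so the value is unchanged if $X$ is replaced by a sufficiently large compact ball. The only delicate point is to verify that these regularity conditions indeed hold so that the minimax theorem applies; once equality of minimax and maximin is in hand, the remainder is the elementary sandwich above together with uniqueness from strong convexity.
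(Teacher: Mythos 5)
Your proof is correct and follows essentially the same route as the paper's: both sandwich $M(\bar{x},\bar{y})$ between $\min_{x\in X}M(x,\bar{y})$ and $\max_{y\in Y}M(\bar{x},y)$, use strong duality to force these to coincide, and then conclude $\bar{x}=\argmin_{x\in X}M(x,\bar{y})$ from the uniqueness of the minimizer of the strongly convex function $M(\cdot,\bar{y})$. The only difference is that you take care to justify strong duality (via Sion's theorem plus coercivity from strong convexity), whereas the paper simply asserts it; that extra care is welcome but does not change the argument.
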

Note that for our problem $\bx, \bar{y}$   are indeed unique, since both can be described as optimal solutions to (different) strongly-convex minimization problems.

\textbf{Translating Dual Guarantees into Primal Guarantees:}
Let $\eps>0$, and define $\teps = O(\eps^4)$ (similarly to how we define in  Theorem~\ref{theorem:Main}).
Now, given $\barlambda$ that satisfies $d(\lambda^*) -d(\barlambda)\leq \teps$, and $\bar{x}$ of Eq.~\eqref{eq:Sol_x_x0_y} that satisfies
$
\bar{x} = \argmin_{x\in\reals^n}  \mathcal L(x,\barlambda)
$, we would like to show that this $\bar{x}$ is an $\eps$-optimal solution to Problem~\eqref{eq:problem_NormBallAppendix}. This can be done along the exact same lines as we do in our  proof of Theorem~\ref{theorem:Main}, and we therefore omit the details. 

The only difference now, is that we compute exact rather than approximate oracles, and that the dual problem is now one dimensional i.e., $m=1$. We also use the concave bisection algorithm (Alg.~\ref{alg:1DimConc}) as our cutting plane method.

Finally, note that in our case $H: = \max_{x\in\K}|P(x)-1| =1$.

\textbf{Remark:}
It is important to note that Lemma~\ref{lemma:GradientLemmaHighDim} holds as it is (with $m=1$) for unit norm ball constraints, even in the case of non-smooth norms (note that the smoothness parameter does not play a role in this lemma).  This is the important ingredient in applying the exact argumentation in the proof of Theorem~\ref{theorem:Main}, in order to translate the dual guarantees into primal guarantees.

\textbf{Extending Duality Beyond  Norms:}
Given a compact convex set $\K\subseteq\reals^n$ that contains the origin, we can define its polar set as follows,
$$
\K^*: =  \{z\in\reals^n: \max_{x\in\K}z^\top x\leq 1 \}~.
$$
Thus, very similarly to what we have done for norms, one can show that given a projection oracle $\Pi_{\K^*}$ onto $\K^*$, one can use it to approximately project onto $\K$. And the oracle complexity is  logarithmic in the (inverse) target accuracy.

\subsection{Proof of Lemma~\ref{lemma:MinimaxMaximinSol}}
\label{sec:Prooflemma:MinimaxMaximinSol}
\begin{proof}
First notice that due to strong duality the following holds,
$$
\max_{y\in Y}M(\bx,y) = \min_{x\in X}M(x,\by)
$$
Using the above we may write the following,
\begin{align*}
0  \leq M(\bx,\by) - \min_{x\in X}M(x,\by) 
 =
 M(\bx,\by) - \max_{y\in Y}M(\bx,y) \leq 0
\end{align*}
Which implies that,
$$
 \min_{x\in X}M(x,\by) =  M(\bx,\by) 
$$
Since $M(\cdot,\bar{y})$ is strongly-convex in $x$ then its minimizer is unique and therefore,
$$
\bx: = \argmin_{x\in X}M(x,\by)~.
$$
which concludes the proof.
\end{proof}

\newpage
\section{Proofs for Section~\ref{sec:HighDimcase},  the Case of Multiple Constraint,  }
\subsection{Proof of Lemma~\ref{lemma:GradientLemmaHighDim}}\label{Proof_lemma:GradientLemmaHighDim}
\begin{proof}
\textbf{First part, proof of statements \textbf{(i)}:} 

First note that since each $h_i$ is $G$-Lipschitz we have, 
$$
\|\bh(x) - \bh(y)\|\leq \sqrt{m}G\|x-y\|~.
$$

Now, let us first show that the minimizers $x^*_\lambda$ are continuous in $\lambda$.
Indeed let $\lambda_1,\lambda_2\geq 0$, by $2$-strong-convexity of $L(\cdot, \lambda_1)$,
\begin{align*}
\| x^*_{\lambda_2} - x^*_{\lambda_1}\|^2 
&\leq
L(x^*_{\lambda_2},\lambda_1) - L(x^*_{\lambda_1},\lambda_1)  \\
& =
L(x^*_{\lambda_2},\lambda_2) - L(x^*_{\lambda_1},\lambda_2)  + (\lambda_1-\lambda_2)^\top ( \bh(x^*_{\lambda_2})-\bh(x^*_{\lambda_1}))\\
&\leq
0 + \sqrt{m}G\|\lambda_1-\lambda_2\|\cdot\| x^*_{\lambda_2}-x^*_{\lambda_1}\|~,
\end{align*}
where in the first line we use the strong-convexity of $L(\cdot, \lambda_1)$, and the last line uses the optimality of $x^*_{\lambda_2}$, and Lipschitzness of $h(\cdot)$. The above immediately implies that,
\begin{align}\label{eq:LipschitznessGradHD}
\| x^*_{\lambda_2}-x^*_{\lambda_1}\| \leq \sqrt{m}G\|\lambda_1-\lambda_2\|
\end{align}
\textbf{ Proof of existence:}
We are now ready to prove the existence of gradients, as well as $\nabla d(\lambda)=  \bh(x^*_\lambda)$.
Indeed let  a scalar $\eps>0$, and two vectors $\lambda,v \in \reals^n$ such $\lambda,\lambda+\eps v\geq 0$ (elementwise).
In this case,
\begin{align*} 
d(\lambda+\eps v) - d(\lambda) =
L(x^*_{\lambda+\eps v},\lambda+\eps v) - L(x^*_{\lambda},\lambda)  
 \leq
L(x^*_{\lambda},\lambda+\eps v) - L(x^*_{\lambda},\lambda)  
=\eps v^\top \bh(x^*_\lambda)
\end{align*}
Similarly, we can show,
\begin{align*}
d(\lambda+\eps v) - d(\lambda) =
L(x^*_{\lambda+\eps v},\lambda+\eps v) - L(x^*_{\lambda},\lambda)  
 \geq 
 \eps v^\top \bh(x^*_{\lambda+\eps v})
\end{align*}
Thus,
\begin{align*}
 v^\top\bh(x^*_{\lambda+\delta})
\leq \frac{d(\lambda+\eps v) - d(\lambda) }{\eps} \leq  v^\top\bh(x^*_\lambda)
\end{align*}
Taking $\eps\to 0$, and using Eq.~\eqref{eq:LipschitznessGradHD} together with the Lipschitz continuity of $\bh(\cdot)$, implies
$\nabla d(\lambda) : =  \bh(x^*_\lambda)$.

\textbf{Proof of smoothness:} using  Eq.~\eqref{eq:LipschitznessGradHD} implies,
\begin{align}\label{eq:SmoothnessHD}
\|\nabla d(\lambda_1)-\nabla d(\lambda_2)\| = \|\bh(x^*_{\lambda_1}) - \bh(x^*_{\lambda_2})\| \leq \sqrt{m}G\| x^*_{\lambda_1}-x^*_{\lambda_2}\| \leq  mG^2\|\lambda_1-\lambda_2\|~.
\end{align}

\textbf{Second part, proof of statements \textbf{(ii)}:}  Here we show that $\forall \lambda\geq 0$ we have
$$
d(\lambda) - d(\lambda_*)   \leq m^2G^2\|\lambda -\lambda_*\|_\infty^2 + mH\|\lambda-\lambda_*\|_\infty~.
$$
Indeed Eq.~\eqref{eq:SmoothnessHD} shows that $d(\cdot)$ is $2mG^2$-smooth and thus $\forall \lambda\geq0$,
$$
d(\lambda) - d(\lambda_*) \geq \nabla d(\lambda_*)^\top (\lambda-\lambda_*) - mG^2 \|\lambda - \lambda_*\|^2~.
$$
Re-arranging and using $\nabla d(\lambda_*) = \bh(x^*)$ gives,
$$
d(\lambda_*) - d(\lambda) \leq mG^2\|\lambda -\lambda_*\|^2 + \|\bh(x^*)\|\cdot\|\lambda-\lambda_*\|
$$
Since we assume that $\max_{x: \forall j,~h_j(x)\leq 0}|h_i(x)| \leq H,~\forall i\in[n]$, this means that $\| \bh(x^*)\| \leq \sqrt{m}H$. Using this together with $\|y\|_2\leq \sqrt{m}\|y\|_\infty ,\forall y\in\reals^m$ establishes this part.

\textbf{Last part, proof of statements \textbf{(iii)}:} 
	To see that $x^*_{\lambda_*} = x^*$, recall that $L(x,\lambda)$ is convex-concave and therefore strong-duality applies, meaning,
	\begin{align*}
	& L(x^*_{\lambda_*},\lambda_*) := \max_{\lambda\geq 0}\min_{x\in\reals^n} L(x,\lambda)  
	=  \min_{x\in\reals^n}\max_{\lambda\geq 0} L(x,\lambda) = \|x^*-x_0\|^2~.
	\end{align*}
	Using the above together with the definition of $x^*_{\lambda_*}$ and the $2$-strong-convexity of $L(\cdot,\lambda_*)$ implies,
	\begin{align*}
	&\|x^* - x^*_{\lambda_*}\|^2 \leq L(x^*,\lambda_*) - L(x^*_{\lambda_*},\lambda_*)  \\
	& = \left( \|x^*-x_0\|^2 +(\lambda_*)^\top \bh(x^*)\right) - \|x^*-x_0\|^2 \leq 0
	\end{align*}
	where we used $(\lambda_*)^\top \bh(x^*) \leq 0$, which holds since $\lambda_*\geq 0$, and $x^*$ is a feasible solution. Thus $x^*_{\lambda_*} = x^*$.

\end{proof}

\subsection{Proof of Corollary~\ref{corollary:SetSolutions}}\label{proof:corollary1}
\begin{proof}
Recall that in the dual problem~\eqref{eq:dual}, then $\D: = \{\lambda\in\reals^m:~\forall i\in[m];~ \lambda^i\in [  0,R]  \}$.
Denote  the $i^{\rm{th}}$ component of the optimal solution by $\lambda_*^{(i)}$.
And for each $i$ define a segment $S_i\subset \reals_{+}$ as follows:

If $\lambda_*^{(i)}\leq R$, take $S_i = [\lambda_*^{(i)},\lambda_*^{(i)}+r(\eps)]$.
Otherwise, take $S_i = [\lambda_*^{(i)}-r(\eps),\lambda_*^{(i)}]$.

Clearly, the $m^{\rm{th}}$ dimensional box, $\B: = \times_{i=1}^m S_i $, is of radius $r(\eps)$. Using Lemma~\ref{lemma:GradientLemmaHighDim} (part \textbf{(ii)}) , it immediately follows that
$\B$ is contained in the set of $\eps$-optimal solutions in $\D$.

\textbf{Remark:} note that in the proof we assume that $ \frac{1}{2m}\min\{1/H,1/G\} \leq R$, which leads to $\B$ being contained in $\D$. If this is not the case we can always increase $R$.
\end{proof}

\subsection{Proof of Lemma~\ref{lemma:GradOracleLemmaHD}}\label{proof:lemma4.3}
\begin{proof} \label{proof_lemma:GradOracleLemmaHD}
Recall that Alg.~\ref{alg:MinimizingPrimalHighDim} outputs $x\in\reals^n$ such that,
\begin{align}\label{eq:Alg2GuaranteeHD}
L(x,\lambda) - L(x^*_\lambda,\lambda) \leq \teps
\end{align}
Thus we immediately get,
$$
0\leq v - d(\lambda): = L(x,\lambda) - L(x^*_\lambda,\lambda) \leq\teps~.
$$
Using Eq.~\eqref{eq:Alg2GuaranteeHD} together with the $2$-strong-convexity of $L(\cdot,\lambda)$, and with the optimality of $x^*_\lambda$ implies,
$$
\|x - x^*_\lambda\|^2 \leq L(x,\lambda) - L(x^*_\lambda,\lambda) \leq \teps~.
$$
Finally, combining the above with  the Lipschitz continuity  of $\bh(\cdot)$ gives,
$$
\|g-\nabla d(\lambda)\| = \|\bh(x) - \bh(x^*_\lambda)\| \leq \sqrt{m}G \|x-x^*_\lambda \| \leq \sqrt{mG^2\teps}~.
$$
\textbf{Runtime:} 
The guarantees above are independent of the method used for (approximately) minimizing the objective $\min_x L(x,\lambda)$.
It is immediate to see that $L(\cdot,\lambda)$ is $2$-strongly-convex and $2+L\|\lambda\|_1$ smooth objective (recall each $h_i$ is $L$-smooth).
Thus, using Nesterov's method with $\alpha=2, \beta = 2+ L\|\lambda\|_1$, it finds an $\teps$-optimal solution within
$T_{\rm{Internal}}:=O\left(\sqrt{1+ 0.5 L\|\lambda\|_1} \log\left(\frac{(1+ 0.5L\|\lambda\|_1)\|x_0-x^*_\lambda\|^2}{\teps}\right) \right)$ iterations.
Thus the total runtime of Alg.~\ref{alg:MinimizingPrimal} in this case is $O(nT_{\rm{Internal}})$.

To simplify the bound, notice that $\|\lambda\|_1\leq m\|\lambda\|_\infty \leq mR$, and   that $x^*_\lambda$ is $\sqrt{m}G$-Lipschitz continuous in $\lambda$ (see Lemma~\ref{lemma:GradientLemmaHighDim}). Thus letting $x^*$ be the optimal solution to the projection problem (Eq.~\eqref{problem}), since we assume $\|x^*-x_0\|\leq B$, then for any $\lambda \in\D$,
\begin{align*}
\|x_0 - x^*_\lambda\|^2 
& \leq 2\left(\|x_0 - x^*\|^2+\|x^* - x^*_\lambda\|^2   \right) \\
&\leq
2\left(B^2+\|x^*_{\lambda*} - x^*_\lambda\|^2   \right) \\
&\leq
2(B^2 + m^2G^2R^2)~,
\end{align*}
where $\lambda_*$ is the optimal dual solution, and we used $x^* = x^*_{\lambda_*}$.
Thus we may bound, 
\begin{align}\label{T_internal}
T_{\rm{Internal}}:=O\left(\sqrt{1+mR L} \log\left(\frac{(1+mR L)(B^2 + m^2G^2R^2)}{\teps}\right) \right).
\end{align}
\end{proof}

\subsection{Proof of Lemma~\ref{lemma:GradIneqSmooth}}\label{proof:lemma4.4}
\begin{proof}
Let us consider the following function, 
$$
C(\lambda): = F(\lambda) - \nabla F(\lambda_*)^\top (\lambda-\lambda_*)
$$
Clearly $C(\cdot)$ is also concave, and it global maximum is obtained in $\lambda_*$ since $\nabla C(\lambda_*) = 0$.
Also $C(\cdot)$ is $L$-smooth, and therefore $\forall \lambda\in\D,u\in\reals^m$ we have,
$$
C(\lambda+u) \geq C(\lambda) +\nabla C(\lambda)^\top u-\frac{L}{2}\|u\|^2 ~.
$$
Taking  $u=\frac{1}{L}\nabla C(\lambda)$ we get,
$$C(\lambda+u) \geq C(\lambda) +\frac{1}{L}\|\nabla C(\lambda)\|^2-\frac{1}{2L}\|\nabla C(\lambda)\|^2~.$$
Thus $\forall \lambda\in\D$,
\begin{align*}
\|\nabla C(\lambda)\|^2 &\leq 2L \big( C(\lambda+u) - C(\lambda) \big)\\
&  \leq 2L \big( C(\lambda_*) - C(\lambda) \big)~,
\end{align*}
where we have used the fact that $\lambda_*$ is the global maximum of $C(\cdot)$.
Now using the above together with $C(\lambda): = F(\lambda) - \nabla F(\lambda_*)^\top (\lambda-\lambda_*)$, we obtain,
\begin{align*}
\|\nabla F(\lambda) - \nabla F(\lambda_*)\|^2 
&  \leq 2L \big( F(\lambda_*) - F(\lambda) \big) + 2L \nabla F(\lambda_*)^\top (\lambda-\lambda_*) \\
&\leq 2L \big( F(\lambda_*) - F(\lambda) \big) ~,
\end{align*}
where the last inequality uses the fact that $\lambda_*$ is the maximum of $F(\cdot)$ over $\D$, and thus $\forall \lambda\in\D;~
\nabla F(\lambda_*)^\top (\lambda-\lambda_*)\leq 0$.
\end{proof}

%
%

\ignore{
\newpage
\section{Old Stuff}
\newpage
\subsection{Cutting Plane Method with Approximate Oracles}
Here we discuss the case where we would like to minimize a function $g:\K \mapsto \reals$, and we have $\eps$-approximate access to gradient and value oracles for $g$. We rely on the lecture notes  of Anatoli Jouditski (Chapter 3 Section 3.1).
Our goal is to find an $\eps$- approximate solution to $d(\lambda_*): = \min_{\lambda\in \K} d(\lambda)$, where $\lambda_*\in \arg\max_{\lambda\in\K} d(\lambda)$.

Formally assume that we have a closed convex set $\K$ with diameter $D = \sqrt{m}\bar\lambda$. And assume that at a given query point $\lambda\in\K$ we may access oracles $\tilde g$ and $\G$ that provide $\eps$-accurate estimates of $d(\lambda), \nabla d(\lambda)$.

\begin{algorithm}[H]
	\caption{Cutting plane scheme with approximate oracle}
	\label{alg:1.5CuttingPlane}
	\begin{algorithmic}
		\STATE \textbf{Input}:  $\lambda_0$  is a starting point, 	$\lambda_{\min} =0$,  $\lambda_{\max} = \bar{\lambda}$,  the box $\K_0 = \{0\leq \lambda^i \leq \bar \lambda, i =  1, \ldots, m\}$ is the initial localiser, $\e$ is the required accuracy
		\FOR{$t=1 \ldots T$}
		\STATE Choose $\lambda_t$ to be a center of gravity of the current localiser  $\K_{t-1}$ 
		\IF{
			$\|\nabla d(\lambda_t)\|\leq \e$
		} 
		\STATE{Terminate} 
		\ELSE 
		\STATE { Define the new localiser:}\\
		$\K_{t} = \{\lambda \in \K_{t-1}|  (\lambda -\lambda_t)^T\nabla d(\lambda_t)\geq 0\}$
		\STATE   {$t \gets t+1$}  
		\ENDIF
		\ENDFOR
		\STATE \textbf{return} $\lambda_* \in \{\lambda_t \}_{t\leq T} $ such that $d(\lambda_*) = \max_{t<T} d(\lambda_t)$
	\end{algorithmic}
\end{algorithm}

\begin{enumerate}
	\item Now upon choosing a point $x_t$, the oracles provides us with $\tilde d(\lambda_t)\in\reals$, and $\G(\lambda_t)\in\reals^n$ such that 
	$|\tilde d(\lambda_t)-d(\lambda_t)|\leq \eps$, and $\|\G(\lambda_t) -\nabla d(\lambda_t)\| > \eps$.
	By concavity, for any point $\lambda\in\K$ such that $\G(\lambda_t) \cdot(\lambda - \lambda_t)\leq 0$ we have,
	\begin{align*}
	d(\lambda) &\leq d(\lambda_t) + \nabla d(\lambda_t)\cdot(\lambda-\lambda_t) \\
	&\leq d(\lambda_t) + \G(\lambda_t)\cdot(\lambda-\lambda_t) + \eps D \\
	&\leq d(\lambda_t)  + \eps D 
	\end{align*}
	Thus, we can divide into two cases. \\
	\textbf{Case (i)} If there exists an optimal point $\lambda_*\in \K$ such that $\G(\lambda_t) \cdot(\lambda-\lambda_t)\geq 0$, then by the above we have
	$$
	d(\lambda_t)\geq d(\lambda_*) - \eps D
	$$
	And by the assumption of the oracle we also have,
	$$
	\tilde d(\lambda_t) \geq d(\lambda_t)-\eps \geq d(\lambda_*) - \eps (D+1)
	$$
	
	\textbf{Case (ii)} If all optimizers, $x^*$ satisfy $\G(\lambda_t) \cdot(\lambda-\lambda_t)\leq 0$, and thus they belong to,
	$$
	\K_t: = \{\lambda\in\K : \G(\lambda_t) \cdot(\lambda-\lambda_t)\geq 0 \}~.
	$$
	Thus, we can iterate the process. We start with a localizer $\K_0 = \K$, choose $\lambda_t\in \text{Interior}(\K_{t-1})$, and check whether $\|d(\lambda_t)\|\leq \eps$. If so, we found and $\eps D$-optimal solution. Otherwise, we can iterate and choose,
	$$
	\K_t: = \{\lambda\in\K : \G(\lambda_t) \cdot(\lambda-\lambda_t)\leq 0 \}~.
	$$
	The approximate solution after $T$-steps is $\bar{\lambda}_T\in\{\lambda_1,\ldots,\lambda_T\}$ such
	$$
	g(\bar{\lambda}_T): = \max_{t\leq T} \tilde d(\lambda_t)~. 
	$$
	(We can similarly prove Lemma 3.2.1 from Jouditski's lecture notes.)
\end{enumerate}

\subsection{Ellipsoid method (Nemirovsky \& Yudin 1979).}
We suggest to apply the Ellipsoid method due to Nemirovsky \& Yudin (1979) in this case to the concave maximization problem (\cref{dual}). 
The only difference is that the original method makes use of the exact gradient of the objective $\nabla d(\lambda)$. In contrast, in our case we only can use the $\bar\e$-accurate oracle information, provided by the Algorithm 2. We show that by simply replacing $\nabla d(\lambda_t)$ by the oracle information $\G_t$ we obtain $\e$ accurate solution  with the same convergence rate.

We write the case without fucntional constraints, since the constraints in this case are the simple box constraints: $\K = \left\{\lambda\in\R^m: 0 \leq  \lambda^i \leq \bar \lambda \forall i = 1\ldots,m \right\}$.
\begin{algorithm}[h]
	\caption{$m$-dim Concave Optimization in dual space}
	\label{alg:2Ellipsoid}
	\begin{algorithmic}
		\STATE \textbf{Input}:  $\bar{\lambda}$, $\eps$
		\STATE {Initialization}: 
		The box $\K = \{\lambda\in\R^m| \lambda_{\min} \leq \lambda^i \leq  \lambda_{\max} \}$, $\tilde{\eps} =\frac{\eps^2}{2G^2}$\\
		$\alpha(m) = \left\{\frac{m^2}{m^2 - 1}\right\}^{1/4}$, 
		$\gamma(m) = \sqrt{\frac{m-1}{m-2}}\alpha(m) $, $ \kappa^m(m) = \frac{n^2}{n^2 - 1}\sqrt{\frac{m-1}{m+1}} $\\
		Choose $m\times  m$ nonsingular matrix $B_0$ and $\lambda_0$ is center of the box,  such that $\K_0   =  \{B_0 u + \lambda_0|u^Tu \leq 1\}$ contains $\K.$ Choose $\beta >0$ such that $\beta \leq  \frac{EllOut(\K)}{EllOut(\K_0)}.$
		 \FOR{$t=1 \ldots T$}
		 \IF{
		 	$\lambda_t  \notin int \K$
	 	} 
		\STATE{1)} Call step $t$ non-productive, find nonzero $e_t$ such that $ (\lambda -\lambda_t)^Te_t\leq 0 \forall  \lambda  \in \K$
		\ELSE 
		\STATE {2)  Call step $t$ productive. Call the oracle to compute the quantities:} $(d(\lambda_t), \nabla d(\lambda_t))  =  (\|x_t-x_0\|^2 + \lambda_t h(x^*_{\lambda_t}), h(x^*_{\lambda_t})) \gets \G(\lambda_t,\tilde{\eps})$.  \\Set $e_t = -\G(\lambda_t,\tilde \e).$
		\ENDIF 
		\STATE{3) Set } 
	 $p = \frac{B^T_{t-1}e_t}{\sqrt{e_t^TB_{t-1}B_{t-1^Te_t}}},$\\	
	 $B_t = \alpha(m) B_{t-1} - \gamma(m)(B_{t-1}p)p^T,$\\  $\lambda_{t+1} = \lambda_t -  \frac{1}{m+1}B_{t-1}p$\\
	 A new localiser 
	 $\K_t = \left\{\lambda = B_t u + \lambda_t|u^Tu \leq 1\right\}$
		\IF{$\kappa^t(m) \leq \eps\beta$}
		\STATE \textbf{return} $\lambda_* = \arg\max_{t<T ~ productive~steps} d(\lambda_t)$
		\ELSE 
		\STATE   {$t \gets t+1$}  
		\ENDIF
		\ENDFOR
	\end{algorithmic}
\end{algorithm}
\begin{theorem}[Yuditsky 3.5.1]
	For a given accuracy $\e$ Ellipsoid method $Ell(\e)$, as applied to a problem (\ref{dual}) of dimentionality $m$, terminates in no more than 
	$A(Ell(\e)) = [\frac{\ln (1/\beta\e)}{\ln (1/\kappa(m))}] \leq [2 m(m-1)\ln (1/\beta\e)]$ and solves P with relative accuracy $\e$. Given the direction defining $t$-th cut, it takes $O(m^2)$ arithmetical iterations to update $B_{t-1},\lambda_t$ to $B_t, \lambda_{t+1}$. \end{theorem}
\begin{proof}
	From Lemma 3.5.1 it follows that $$EllOut(\K_t) \leq \kappa^t(m)EllOut(\K_0)\leq \beta^{-1}\kappa^t(m)EllOut(\K).$$ Due to the termination criterion, definition of $\e$, $$EllOut(\K_t) \leq \e EllOut(\K).$$ \iu{Then, refer to Proposition 3.3.1 of the cutting plane scheme, the authors say it can be literally repeated for the ellipsoid method. We should adopt it carefully to the oracle case, similarly to the proof of Lemma 3.2.1 for the oracle information, which is shown below in the section 5.2.} \end{proof}

	\subsection{Ellipsoid method with Approximate oracles proof continuation}
	The previous discussion cannot be applied directly since there exist non-productive steps, not all the cut-offs are like in cutting plane, sometimes we can appear outside the original set.
		
	Actually, the flow should be as follows. If the algorithm terminated - everything is straightforward and the accuracy is achieved. If not, then we have 
	$$EllOut(\K_T) \leq \e EllOut(\K).$$ Let $x^*$ be an optimal solution to the problem and let $\K^{\e} = \lambda_* + \e(\K - \lambda_* ).$ Then $\K^{\e} $ is the closed and bounded subset of $\K.$ Also, we have $EllOut(\K^{\e}) = \e EllOut(\K) > EllOut(\K_T).$  Hence, $\K^{\e}$ cannot be a subset of $\K_T,$ i.e., $\E  y\in \K^{\e}\setminus \K_T.$ Now, $y\in \K$, and since it is not in $\K_T$, it was catted off at some step $t\leq T.$ That  means: 
	\begin{align}\label{cut}
	e_t^T(y-\lambda_t)>0
	\end{align} Also, since $y\in\K^{\e}$, we have representation \begin{align}\label{hole}
	y = (1-\e)\lambda_* +  \e z
	\end{align} with certain $z\in\K.$
	
Next, we prove that in fact $t$-th step is productive. 
	\iu{(Here we need carefully to adapt it to separating hyper-planes instead of constraint gradients)}
 Then, since $t$ is a productive step, we have $e_t = \nabla d(\lambda_t).$ Therefore, by (\ref{cut} ) we have $g(y) \leq d(\lambda_t).$ Thus, combining it with (\ref{hole}) we get
 $$ d(\lambda_t) - g^* < g(y) - g^*  \leq \e(\max_{\K} g - g^*).$$

}
\end{document}